\begin{document}
\def\K{\mathbb{K}}
\def\R{\mathbb{R}}
\def\C{\mathbb{C}}
\def\Z{\mathbb{Z}}
\def\Q{\mathbb{Q}}
\def\D{\mathbb{D}}
\def\N{\mathbb{N}}
\def\T{\mathbb{T}}
\def\P{\mathbb{P}}
\def\A{\mathscr{A}}
\def\CC{\mathscr{C}}
\renewcommand{\theequation}{\thesection.\arabic{equation}}
\newtheorem{theorem}{Theorem}[section]
\newtheorem{cond}{C}
\newtheorem{lemma}{Lemma}[section]
\newtheorem{corollary}{Corollary}[section]
\newtheorem{prop}{Proposition}[section]
\newtheorem{definition}{Definition}[section]
\newtheorem{remark}{Remark}[section]
\newtheorem{example}{Example}[section]
\bibliographystyle{plain}

\title[Riemann-Poisson and K\"ahler-Poisson complex manifolds]{Riemann-Poisson and K\"ahler-Poisson complex manifolds and structures}
\author[I.\  Hamidine \& M.\ Saminou\ Ali ]{Ibrahima Hamidine \& Mahamane Saminou Ali}

\address{D\'epartement de Math\'ematiques et Informatique\\ Facult\'e des Sciences et Techniques\\ Universit\'e Abdou Moumouni, BP: 10662 Niamey (Niger)}

\email{ih5877@gamil.com}
\address{D\'epartement de Math\'ematiques\\ Facult\'e des Sciences et Techniques\\ Universit\'e d'Agadez, BP: 199 Agadez (Niger)}
\email{mahamanesaminou@gmail.com}
\thanks{This works was supported by the EDP-MC Network with finance support from the International Science Program (ISP Grants Burk01).}
\keywords{Contravariant connections, Poisson manifolds and structure, complex structure, Hermitian form.}

\subjclass{32Q15, 32Q60, 53C15, 53D17, 37J15}

\date{\today}

\maketitle
\begin{abstract}In this paper we consider structures of complex  Poisson brackets on the space of smooth functions in a $n$-dimensional complex manifold generated by the $(1,1)$-form $d=\partial+\overline{\partial}$-closed and non-degenerate (with non-holomorphic and non-antiholomorphic coefficients). In this case, we have view the compatibility between complex Poisson and Riemannian structures and an example is giving in $\C^\ast$.
\end{abstract}

\section{Introduction}
Poisson geometry is considered to be the transition between non-commutative algebra and differential geometry. From the point of view of physicians, it is the transition between classical mechanic and quantum mechanic, because Poisson geometry play a crucial role in the development of quantization. Weinstein (\cite{We})  showed its essential importance in Hamiltonian mechanic.

A Poisson structure on a differential manifold $M$ is the data of a Lie bracket on  the space of smooth functions $\mathscr{C}^{\infty}(M)$ satisfying the Jacobi's identity. It's also equivalently defined as the data of  a bi-vector field $\pi$ satisfying some condition. Since, the introduction of the notion of contravariant connection by Vaismann in \cite{IV}, and its generalization by Fernandes in \cite{Fe}, the Poisson geometry has evolved  considerably.    

The study of compatibility between  Poisson and Riemann structures was began with Vaisman \cite{IV}, and it was generalized by Boucetta \cite{MB1,MB2,MB3}. It's motivated by the geometry behind a symplectic form and give a  lot of properties of the non degenerate symplectic leaves \cite{Ha, AHB}.  

Lot of notions of compatibilities between Poisson and Riemann structures were done in literature, all generalizing K\"ahler manifold. In spite of, there is not a theory of manifolds unifying Poisson and Riemann structures. Alba and Vargas proposed in \cite{AV} a notion of compatibility between Poisson, metric and partial complex structures, conjointly with the geometric conditions of integrabilities.

In complex geometry, Poisson tensor were generally defined as a $(2,0)$-vector field on a complex  holomorphic space on manifold $M$ \cite{AV}. In the special case of a toric complex manifold, the Poisson tensor were defined as a $(1,1)$-vector field by \cite{AC,CG}.

In this paper, we study the compatibility between complex Riemann, K\"ahler, Poisson structures. We consider complex  Poisson brackets on the space of smooth functions on a $n$-dimensional complex  manifold generated by the $(1,1)$-form $d$-closed and non-degenerate (with non-holomorphic and non-antiholomorphic coefficients).

This paper is organized as follow. After this introduction giving, the second section give some preliminaries. The third section, give the Poisson tensor as a $(1,1)$-vector field and some of its  properties. The forth section study the notion of compatibility between the three structures. In the last section, an example is giving.    
  
\section{Preliminary}
\subsection{Complex manifolds}
Let $M$ be a $n$-dimensional analytic complex manifold and $U$ a domain of $M$. A holomorphic vector field on $U$ is a holomorphic map $X : U \subset M \rightarrow T^{1,0}M,\; p \mapsto X(p) \in T^{1,0}M$.
\vskip 1mm
\noindent
In a local coordinates $\{z_1, \dots, z_n\}$ of $U$, one has:
\begin{eqnarray*}
X= \sum_{j=1}^n X_j \frac{\partial}{\partial z_j},
\end{eqnarray*}
where $X_j: U \rightarrow \mathbb{C}$ are holomorphic functions.

\begin{definition}{\rm 
Let $M$ be a $n$-dimensional analytic complex manifold. The holomorphic tangent bundle is a $CR$-structure on $M$ if the following assertions are satisfied:
\begin{enumerate}
\item $T^{1,0}_zM \cap T^{0,1}_zM = \{0\}$, for all $z\in M$;
\item For all $Z,W \in \mathscr{C}^{\infty}(U, T^{1,0}M)$, then $[Z,W]\in \mathscr{C}^{\infty}(U, T^{1,0}M)$, for $U \subset M$ a domain of $M$. Here $T^{0,1}M = \overline{T^{1,0}M}$ is the conjugate of $T^{1,0}M$.
\end{enumerate}  
}
\end{definition}

\begin{prop}
Let $M$ be a $n$-dimensional analytic complex manifold and $X$ and $Y$ two hohomorphic vectors fields on a domain $U \subset M$. Then, the Lie bracket of $X$ and $Y$ denoted by $[X,Y]:= XY -YX$ is a holomorphic vector field on $U$. 
\end{prop}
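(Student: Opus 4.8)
The plan is to verify the two defining properties of a holomorphic vector field for $[X,Y] := XY - YX$: that it is a genuine vector field (a derivation of $\mathscr{C}^\infty(U)$, or equivalently a section of $T^{1,0}M$ with the right transformation law), and that its component functions are holomorphic. Since $X$ and $Y$ are already holomorphic, the first property is inherited from the standard fact that the commutator of two derivations is a derivation; the content is entirely in checking holomorphy of the coefficients, so I would spend essentially all the effort there.

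First I would write everything in a local holomorphic chart $\{z_1,\dots,z_n\}$ on $U$, so that $X = \sum_j X_j \frac{\partial}{\partial z_j}$ and $Y = \sum_k Y_k \frac{\partial}{\partial z_k}$ with $X_j, Y_k : U \to \mathbb{C}$ holomorphic. Then I would expand the commutator in the usual way,
\begin{eqnarray*}
[X,Y] = \sum_{k=1}^n \left( \sum_{j=1}^n \Bigl( X_j \frac{\partial Y_k}{\partial z_j} - Y_j \frac{\partial X_k}{\partial z_j} \Bigr) \right) \frac{\partial}{\partial z_k},
\end{eqnarray*}
the second-derivative terms cancelling by symmetry of mixed partials, exactly as in the real case. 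This exhibits $[X,Y]$ as a $\mathbb{C}$-linear combination of the $\frac{\partial}{\partial z_k}$, hence a section of $T^{1,0}M$ over $U$.

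Next I would argue that each coefficient $Z_k := \sum_j \bigl( X_j \frac{\partial Y_k}{\partial z_j} - Y_j \frac{\partial X_k}{\partial z_j}\bigr)$ is holomorphic on $U$. The key point is that the holomorphic partial derivative $\frac{\partial}{\partial z_j}$ of a holomorphic function is again holomorphic: concretely, $\frac{\partial}{\partial \bar z_\ell} \frac{\partial f}{\partial z_j} = \frac{\partial}{\partial z_j} \frac{\partial f}{\partial \bar z_\ell} = 0$ whenever $f$ is holomorphic, so $\frac{\partial Y_k}{\partial z_j}$ and $\frac{\partial X_k}{\partial z_j}$ satisfy the Cauchy-Riemann equations. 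Since products and sums of holomorphic functions are holomorphic, each $Z_k$ is holomorphic, and therefore $[X,Y] = \sum_k Z_k \frac{\partial}{\partial z_k}$ is a holomorphic vector field on $U$. Finally I would remark that this computation is chart-independent: on overlaps the holomorphic change-of-coordinates formulas transform the components correctly (again because the Jacobian $\frac{\partial w_i}{\partial z_j}$ of a holomorphic transition map is holomorphic), so the locally defined object glues to a well-defined holomorphic vector field on all of $U$.

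The main — and essentially only — obstacle is the bookkeeping that the operation $\frac{\partial}{\partial z_j}$ preserves holomorphy; once that is isolated and stated cleanly via the commuting of $\partial/\partial z_j$ and $\partial/\partial \bar z_\ell$, the rest is the routine cancellation of second-order terms familiar from the smooth setting. No deeper input (such as the Newlander–Nirenberg theorem or any integrability argument) is needed here, since we work in an already-given holomorphic atlas.
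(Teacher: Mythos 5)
Your proposal is correct and follows essentially the same route as the paper: expand $[X,Y]$ in a local holomorphic chart, cancel the second-order terms, and read off the coefficients $\sum_j\bigl(X_j\,\partial Y_k/\partial z_j - Y_j\,\partial X_k/\partial z_j\bigr)$. In fact you are slightly more thorough than the paper's own proof, which stops at the coordinate formula and leaves implicit the key point you make explicit --- that $\partial/\partial z_j$ preserves holomorphy and hence the coefficients are themselves holomorphic --- as well as the chart-independence remark.
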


 \begin{proof}
 If we identify $M$ to $\C^n$, then for all $z\in \C^n$, we have locally:
 \begin{eqnarray*}
 X(z) = \sum_{j=1}^n X_j(z)\frac{\partial }{\partial z_j} \quad \mbox{and} \quad Y(z) = \sum_{k=1}^n Y_k(z) \frac{\partial}{\partial z_k}.
 \end{eqnarray*}
 For a holomorphic function $f \in \mathscr{C}^\infty(U)$, one has:
 \begin{eqnarray*}
 [X,Y]f &=& (XY)f - (YX)f \\
        &=& \sum_{j=1}^nX_j(z)\frac{\partial}{\partial z_j}\Big(\sum_{k=1}^nY_k(z)\frac{\partial}{\partial z_k}f\Big)-\sum_{k=1}^nY_k(z)\frac{\partial}{\partial z_k}\Big(\sum_{j=1}^nX_j(z)\frac{\partial}{\partial z_j}f\Big)\\
&=&\sum_{j,k}^nX_j(z)\frac{\partial f}{\partial z_k}\frac{\partial Y_k(z)}{\partial z_j}-\sum_{j,k}^nY_k(z)\frac{\partial f}{\partial z_j}\frac{\partial X_j(z)}{\partial z_k}\\
&=&\sum_{k=1}^n\Big(\sum_{j=1}^nX_j(z)\frac{\partial Y_k(z)}{\partial z_j}-Y_j(z)\frac{\partial X_k(z)}{\partial z_j}\Big)\frac{\partial f}{\partial z_k}.
 \end{eqnarray*}
 Then, 
 \begin{eqnarray*}
 [X,Y] = \sum_{k=1}^n\Big(\sum_{j=1}^nX_j(z)\frac{\partial Y_k(z)}{\partial z_j}-Y_j(z)\frac{\partial X_k(z)}{\partial z_j}\Big)\frac{\partial }{\partial z_k}.
 \end{eqnarray*}
 \end{proof}
 
 \begin{corollary}
 If $X$ and $Y$ are two holomorphic vectors fields, then 
 \begin{eqnarray*}
 [X,Y] = -[Y,X],
 \end{eqnarray*}
 and  the Lie operators $L_X$ and $L_Y$ satisfy:
 \begin{eqnarray*}
 L_{[X,Y]} = L_X\circ L_Y - L_Y \circ L_X.
 \end{eqnarray*}
 \end{corollary}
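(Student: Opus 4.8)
The plan is to obtain both assertions straight from the definition $[X,Y]:=XY-YX$, together with the observation that on a holomorphic function the Lie operator acts as the derivation itself, $L_Xf=X(f)$ for $f\in\mathscr{C}^\infty(U)$.

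First I would dispatch the antisymmetry. Swapping $X$ and $Y$ in the defining formula gives $[Y,X]=YX-XY=-(XY-YX)=-[X,Y]$, so nothing more is needed; one may also read the sign change directly off the coordinate expression $[X,Y]=\sum_{k}\big(\sum_{j}X_j\,\partial_{z_j}Y_k-Y_j\,\partial_{z_j}X_k\big)\partial_{z_k}$ obtained in the preceding Proposition, which incidentally re-confirms that $[X,Y]$ is again holomorphic and hence that $L_{[X,Y]}$ is well defined.

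For the operator identity I would evaluate both sides on an arbitrary $f\in\mathscr{C}^\infty(U)$: since $L_{[X,Y]}f=[X,Y](f)=(XY-YX)(f)=X\big(Y(f)\big)-Y\big(X(f)\big)=L_X\big(L_Yf\big)-L_Y\big(L_Xf\big)$ and $f$ is arbitrary, the identity $L_{[X,Y]}=L_X\circ L_Y-L_Y\circ L_X$ follows as an equality of operators on $\mathscr{C}^\infty(U)$. If one wants the same identity acting on holomorphic vector fields rather than functions, I would note that it is then equivalent, through $L_XZ=[X,Z]$, to the Jacobi identity $[[X,Y],Z]=[X,[Y,Z]]-[Y,[X,Z]]$, which in turn drops out of the associativity of the composition of the derivations $X,Y,Z$ on $\mathscr{C}^\infty(U)$.

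I do not expect a genuine obstacle here; the only point deserving a line of care is that, a priori, $L_X\circ L_Y$ is a second-order differential operator while $L_{[X,Y]}$ is first order, and the short computation above is exactly what shows the second-order terms cancel — the same phenomenon that makes the bracket close within the space of holomorphic vector fields, consistently with the Proposition.
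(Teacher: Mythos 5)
Your proof is correct, and since the paper states this corollary without any proof (treating it as immediate from the definition $[X,Y]=XY-YX$ and the coordinate formula in the preceding Proposition), your argument simply supplies the standard verification the authors left implicit: antisymmetry from the definition, and the operator identity by evaluating $L_{[X,Y]}f=(XY-YX)(f)$ on an arbitrary $f$. Your closing remark about the cancellation of second-order terms is exactly the right point of care and is consistent with the Proposition's computation.
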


\subsection{Pseudo-Riemannian Poisson Manifold}
Let $(M,g,\pi)$ be a $n$-dimensional manifold endowed with a pseudo-Riemannian metric $g$ and a Poisson structure $\pi$.  In \cite{IV}, the author introduce a notion of compatibility of Poisson structure and Riemannian metric by the condition $\nabla \pi = 0$, where $\nabla$ is the Levi-Civita connection associated to $g$. This condition induce that the Poisson structure is regular. On the other hand, both of Poisson structures are not regular. Hence, this condition is not universal. The author of \cite{MB1,MB2,MB3} introduce the notion of pseudo-Riemann Poisson manifold and give a notion of compatibility which is more general than the above one.

Let $(M,\pi)$ be a Poisson manifold. A pseudo-Riemannian  $g$ induce on $T^\ast M$ a pseudo-Riemannian metric $g^\ast$ such that:
$$g^\ast (\alpha, \beta) = g (\sharp_g \alpha, \sharp_g \beta), \qquad \forall \alpha,\beta \in \Omega^1(M),$$
where $\Omega^1(M)$ is the space of $1$-form on $M$ and $\sharp_g:T^\ast M \to TM$ is a musical isomorphism associated to $g$. Then we have the following theorem \cite{MB1}.

\begin{theorem}
Let  $(M,\pi,g)$ be a manifold endowed with a Poisson tensor $\pi$ and a pseudo-Riemannian metric $g$. There is a unique contravariant connection $D$, on $M$ such that for all $\alpha,\beta \in \Omega^1(M)$:
\begin{enumerate}
\item[1)] $\pi_\# (\alpha)\cdot g^\ast(\beta,\gamma) = g^\ast ({D}_\alpha \beta,\gamma) + g^\ast (\beta,{D}_\alpha \beta)$,
\item[2)] $[\alpha,\beta]_\pi = {D}_\alpha\beta-{D}_\beta \alpha$.
\end{enumerate}  
\end{theorem}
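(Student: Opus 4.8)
The plan is to imitate, in the contravariant setting, the classical proof of the fundamental theorem of pseudo-Riemannian geometry: first derive a Koszul-type formula from the two conditions (which yields uniqueness), then take that formula as the definition of $D$ and check that it produces a genuine contravariant connection satisfying 1) and 2). Here $[\cdot,\cdot]_\pi$ denotes the Koszul bracket of $1$-forms, $[\alpha,\beta]_\pi = L_{\pi_\#\alpha}\beta - L_{\pi_\#\beta}\alpha - d(\pi(\alpha,\beta))$, and I use freely its elementary properties: it is $\R$-bilinear and antisymmetric, one has $\pi_\#(f\alpha) = f\,\pi_\#(\alpha)$, and $[\alpha,f\beta]_\pi = f\,[\alpha,\beta]_\pi + (\pi_\#(\alpha)\cdot f)\,\beta$ for $f\in\mathscr{C}^{\infty}(M)$; moreover $g^\ast$ is symmetric since $g$ is.

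\emph{Uniqueness.} Suppose $D$ satisfies 1) and 2). Writing 1) for the three cyclic permutations of $(\alpha,\beta,\gamma)$, adding the first two and subtracting the third, and then eliminating the ``wrong-order'' covariant derivatives by means of 2), everything except $2\,g^\ast(D_\alpha\beta,\gamma)$ collects, using the symmetry of $g^\ast$, into expressions built only from $g^\ast$, $\pi_\#$ and $[\cdot,\cdot]_\pi$, namely
\[
2\,g^\ast(D_\alpha\beta,\gamma) = \pi_\#(\alpha)\cdot g^\ast(\beta,\gamma) + \pi_\#(\beta)\cdot g^\ast(\gamma,\alpha) - \pi_\#(\gamma)\cdot g^\ast(\alpha,\beta) + g^\ast([\alpha,\beta]_\pi,\gamma) + g^\ast([\gamma,\alpha]_\pi,\beta) - g^\ast([\beta,\gamma]_\pi,\alpha).
\]
Since $g^\ast$ is non-degenerate, this determines $D_\alpha\beta$, so at most one such $D$ exists.

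\emph{Existence.} Define $D_\alpha\beta$ by the displayed identity. One must then verify: (i) for fixed $\alpha,\beta$ the right-hand side is $\mathscr{C}^{\infty}(M)$-linear in $\gamma$, so that the equation determines a unique $1$-form $D_\alpha\beta$; (ii) $D_\alpha\beta$ is additive and $\mathscr{C}^{\infty}(M)$-linear in $\alpha$; (iii) $D_\alpha\beta$ is additive in $\beta$ and satisfies the Leibniz rule $D_\alpha(f\beta) = f\,D_\alpha\beta + (\pi_\#(\alpha)\cdot f)\,\beta$, so that $D$ is a contravariant connection; (iv) $D$ satisfies 1) and 2). For 2), forming $2g^\ast(D_\alpha\beta,\gamma) - 2g^\ast(D_\beta\alpha,\gamma)$ from the formula, the $\pi_\#$-terms all cancel, the terms $g^\ast([\gamma,\alpha]_\pi,\beta)$ and $g^\ast([\beta,\gamma]_\pi,\alpha)$ cancel against the corresponding terms of $D_\beta\alpha$ by antisymmetry of the bracket, and $g^\ast([\alpha,\beta]_\pi,\gamma)$ doubles, so $g^\ast(D_\alpha\beta - D_\beta\alpha,\gamma) = g^\ast([\alpha,\beta]_\pi,\gamma)$ for all $\gamma$, whence 2). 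For 1), adding $2g^\ast(D_\alpha\beta,\gamma)$ and $2g^\ast(D_\alpha\gamma,\beta)$, the six bracket terms cancel in antisymmetric pairs and, by symmetry of $g^\ast$, the $\pi_\#$-terms collapse to $2\,\pi_\#(\alpha)\cdot g^\ast(\beta,\gamma)$.

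\emph{Where the work lies.} Items (ii)--(iv) are formal once the above bracket identities are in hand; the only point demanding care is (i). When $\gamma$ is replaced by $f\gamma$, the term $-\pi_\#(f\gamma)\cdot g^\ast(\alpha,\beta)$ becomes exactly $-f\,\pi_\#(\gamma)\cdot g^\ast(\alpha,\beta)$, the terms $\pi_\#(\alpha)\cdot g^\ast(\beta,f\gamma)$ and $\pi_\#(\beta)\cdot g^\ast(f\gamma,\alpha)$ generate extra first-order contributions $(\pi_\#(\alpha)\cdot f)\,g^\ast(\beta,\gamma)$ and $(\pi_\#(\beta)\cdot f)\,g^\ast(\gamma,\alpha)$, while $g^\ast([f\gamma,\alpha]_\pi,\beta)$ and $-g^\ast([\beta,f\gamma]_\pi,\alpha)$ generate, via the Leibniz rule, the contributions $-(\pi_\#(\alpha)\cdot f)\,g^\ast(\gamma,\beta)$ and $-(\pi_\#(\beta)\cdot f)\,g^\ast(\gamma,\alpha)$; by the symmetry of $g^\ast$ these four correction terms cancel in pairs, so the right-hand side is indeed $\mathscr{C}^{\infty}(M)$-linear in $\gamma$. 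With (i)--(iv) checked, the uniqueness argument identifies this $D$ as the connection of the statement.
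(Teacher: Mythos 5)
Your proof is correct and follows exactly the route the paper intends: the Koszul-type formula you derive coincides (after using antisymmetry of the bracket, $-g^\ast([\beta,\gamma]_\pi,\alpha)=g^\ast([\gamma,\beta]_\pi,\alpha)$) with the formula \eqref{ED1} displayed immediately after the theorem, the paper itself deferring the proof to \cite{MB1}. You also correctly read condition 1) as metric compatibility, i.e.\ with $g^\ast(\beta,D_\alpha\gamma)$ in place of the statement's typographical $g^\ast(\beta,D_\alpha\beta)$.
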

$D$ is called the Levi-Civita contravariant connection associated to $g$. It's also characterized by the Koszul's formula:
\begin{eqnarray}\label{ED1}
2g^\ast ({D}_\alpha\beta,\gamma)& =& \pi_\# (\alpha)\cdot g^\ast (\beta,\gamma) + \pi_\# (\beta)\cdot g^\ast (\alpha,\gamma)-\pi_\# (\gamma)\cdot g ^\ast (\alpha,\beta)
+g^\ast ([\alpha,\beta]_\pi ,\gamma) \nonumber \\
& &\qquad + g^\ast([\gamma,\alpha]_\pi ,\beta) + g^\ast([\gamma,\beta]_\pi ,\alpha).
\end{eqnarray}

Hence, $(M,\pi,g)$ is said to be a pseudo-Riemann Poisson manifold if :
\begin{eqnarray*}
{D}_\alpha \pi(\beta,\gamma) := \pi_\# (\alpha)\cdot\pi(\beta,\gamma)-\pi({D}_\alpha \beta,\gamma)-\pi(\beta,{D}_\alpha \gamma) = 0.
\end{eqnarray*}

\section{Complex Poisson Manifold}
Let $M$ be a  $n$-dimensional differential complex manifold.

\begin{definition}{\rm 
A $(1,1)$-vector field $X$ on $M$ is a section of the fiber bundle $T^{1,1}M$. A $(1,1)$-form on $M$ is a dual of a $(1,1)$-vector field.}  
\end{definition}

In a local coordinates $(z_1, \dots, z_n)$, a $(1,1)$-vector field $X$ is given by:
\begin{eqnarray*}
X= \sum_{j,k=1}^n X_{jk}\frac{\partial}{\partial z_j}\wedge \frac{\partial}{\partial \overline{z}_k},
\end{eqnarray*}
where $X_{jk}$ are smooth functions in a domain $U$ of $M$.
\vskip 1mm
\noindent
Let $U$ be a domain of $M$. We denote by $\Omega^{1,1}(U)$ the space of the differentials $(1,1)$-forms on $U$.
A $(1,1)$-form $\alpha\in \Omega^{1,1}(U)$, can be writ in this local coordinates as:
\begin{eqnarray*}
\alpha = \sum_{j,k=1}^n \alpha_{jk}dz_j \wedge d\overline{z}_k,
\end{eqnarray*}
where $\alpha_{jk}$ are smooth functions on $U$ and its differential is given by:
\begin{eqnarray*}
d\alpha = \sum_{j,k=1}^n d \alpha_{jk} \wedge dz_j \wedge d\overline{z}_k,
\end{eqnarray*}
with $d= \partial + \overline{\partial}$ such that $d^2 = \partial^2 = \overline{\partial}^2 = 0$ and $\partial\overline{\partial} + \overline{\partial}\partial = 0$.

\begin{remark}{\rm 
The following $(1,1)$-form 
\begin{eqnarray*}
\frac{1}{(2i\pi)^2}\bigwedge\limits_{j=1}^n dz_j \wedge dz_j  = \bigwedge\limits_{k=1}^n(dx_k \wedge d y_k),
\end{eqnarray*}
is a volume form.}
\end{remark}
Let $\pi$ be a $(1,1)$-vector field on $M$, giving by:
\begin{equation}\label{EQP1}
\pi=\sum_{j,k=1}^n\pi_{jk}\partial z_j\wedge\partial\overline{z}_k, \quad \mbox{ with } \quad \partial z_j = \frac{\partial}{\partial z_j}.
\end{equation}
\begin{theorem}
The bracket $\{f,g\} = \pi (df,dg)$ define a Poisson structure on $M$, i.e
\begin{enumerate}
\item[1)] $\{f,g\}=-\{g,f\}$, the skew symmetry;
\item[2)] $\{fg,h\}=f\{g,h\}+\{f,h\}g$, Leibniz's rule;
\item[3)] $\{f,\{g,h\}\}+\{g,\{h,f\}\}+\{h,\{f,g\}\}=0\;\Longleftrightarrow\;[\pi,\pi]_\pi=\frac{1}{2}A_J=0$, the  Jacobi's identity.
\end{enumerate}
\end{theorem}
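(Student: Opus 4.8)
The plan is to verify the three listed properties in turn; only the Jacobi identity 3) requires real work. Writing $df = \partial f + \overline{\partial}f = \sum_j\frac{\partial f}{\partial z_j}\,dz_j + \sum_k\frac{\partial f}{\partial \overline{z}_k}\,d\overline{z}_k$ and pairing it, together with $dg$, against $\pi$ as in \eqref{EQP1}, one obtains the local expression $\{f,g\} = \sum_{j,k}\pi_{jk}\big(\frac{\partial f}{\partial z_j}\frac{\partial g}{\partial \overline{z}_k} - \frac{\partial f}{\partial \overline{z}_k}\frac{\partial g}{\partial z_j}\big)$. Property 1) is then immediate: exchanging $f$ and $g$ changes the overall sign, which merely restates the antisymmetry built into the wedge $\partial z_j\wedge\partial\overline{z}_k$. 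Property 2) follows from $d$ being a derivation, $d(fg) = f\,dg + g\,df$, together with the $\CC^\infty(M)$-bilinearity of the tensor $\pi$: $\{fg,h\} = \pi(f\,dg + g\,df,\,dh) = f\,\pi(dg,dh) + g\,\pi(df,dh) = f\{g,h\} + \{f,h\}g$.

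For 3), set $J(f,g,h) := \{f,\{g,h\}\} + \{g,\{h,f\}\} + \{h,\{f,g\}\}$. The first step is to check that, although each summand is \emph{a priori} a second-order operator, $J$ is $\C$-trilinear and a derivation in each of its arguments: the contributions containing second derivatives of $f$, $g$ or $h$ cancel in the cyclic sum---the standard mechanism underlying the Schouten--Nijenhuis bracket. Hence $J(f,g,h)$ depends only on $df,dg,dh$ and defines a skew-symmetric contravariant $3$-tensor. Expanding it in coordinates $(z_1,\dots,z_n)$ by means of the formula for $\{\cdot,\cdot\}$ above, one identifies this tensor, up to the factor $\tfrac12$, with the Schouten square $[\pi,\pi]_\pi$ of $\pi$, whose components are explicit bilinear expressions in the $\pi_{jk}$ and their first derivatives $\frac{\partial \pi_{jk}}{\partial z_l}$, $\frac{\partial \pi_{jk}}{\partial \overline{z}_l}$; this explicit trivector is what the notation $A_J$ records. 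This already yields the stated equivalence ``Jacobi $\Longleftrightarrow\ [\pi,\pi]_\pi = \tfrac12 A_J = 0$''.

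It remains to see that $[\pi,\pi]_\pi$ actually vanishes. Here I would use the standing hypothesis of the paper that $\pi$ is the bivector attached to a non-degenerate $(1,1)$-form $\omega = \sum\omega_{jk}\,dz_j\wedge d\overline{z}_k$ with $d\omega = 0$: the matrices $(\pi_{jk})$ and $(\omega_{jk})$ are mutually inverse up to transpose and sign, so differentiating $\sum_k\pi_{jk}\omega_{lk} = \delta_{jl}$ expresses $\frac{\partial \pi_{jk}}{\partial z_l}$ and $\frac{\partial \pi_{jk}}{\partial \overline{z}_l}$ through the first derivatives of the $\omega_{jk}$. Substituting these into the coordinate expression of $[\pi,\pi]_\pi$ and invoking $d\omega = 0$---i.e.\ the separate vanishing of the $(2,1)$-part $\partial\omega$ and the $(1,2)$-part $\overline{\partial}\omega$---makes every term cancel. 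This is the $(1,1)$-analogue of the classical implication ``$\omega$ closed $\Rightarrow\ \omega^{-1}$ Poisson'', and the bookkeeping is governed by the identities $d^2 = \partial^2 = \overline{\partial}^2 = 0$ and $\partial\overline{\partial} + \overline{\partial}\partial = 0$ recorded above. Alternatively one could run a coordinate-free argument via Cartan calculus and the intrinsic Schouten bracket, but in this setup the computation in charts is the most transparent.

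The main obstacle is exactly this last bidegree bookkeeping. In the real symplectic case $d\omega = 0$ is a single equation that plugs directly into $[\pi,\pi] = 0$; here $\omega$ has type $(1,1)$, so $d\omega$ splits into a $(2,1)$- and a $(1,2)$-part, each producing its own family of component equations, while $[\pi,\pi]_\pi$ likewise decomposes by bidegree, and one must match these up and verify that the mixed holomorphic/antiholomorphic legs of $\pi$ leave no uncancelled term. A secondary point needing care is to fix once and for all the contraction convention for the $(1,1)$-vector field $\pi$ (a holomorphic leg paired against $dz_j$, an antiholomorphic leg against $d\overline{z}_k$), since an inconsistent choice there would already spoil the skew-symmetry in 1) or flip a sign in the Schouten computation.
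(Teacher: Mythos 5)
Your parts 1) and 2) coincide with the paper's computations. For part 3) you take a genuinely different route. The paper expands the three cyclic terms in coordinates (its equations \eqref{EQJac1}--\eqref{EQJac3}) and asserts that ``by combining'' them the sum vanishes for an \emph{arbitrary} $(1,1)$-vector field $\pi$. You instead argue structurally: only the second-derivative terms cancel in the cyclic sum, the residue is a trivector built from the $\pi_{jk}$ and their first derivatives, this trivector is $\tfrac12[\pi,\pi]_\pi$, and hence Jacobi holds \emph{iff} $[\pi,\pi]_\pi=0$; you then deduce $[\pi,\pi]_\pi=0$ from the hypothesis --- stated in the abstract but not in the theorem --- that $\pi$ inverts a $d$-closed non-degenerate $(1,1)$-form. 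Your route proves exactly the equivalence that item 3) actually asserts, and it is the one that can be made to work: the unconditional cancellation claimed in the paper's proof is false. On $\C^2$ with $\pi=\bar z_1\,\partial z_1\wedge\partial\bar z_1+\partial z_2\wedge\partial\bar z_1$ one computes $\{z_2,\bar z_1\}=1$, $\{\bar z_1,z_1\}=-\bar z_1$, $\{z_1,z_2\}=0$, so that
\[
\{z_1,\{z_2,\bar z_1\}\}+\{z_2,\{\bar z_1,z_1\}\}+\{\bar z_1,\{z_1,z_2\}\}=0-\{z_2,\bar z_1\}+0=-1\neq 0.
\]
The first-derivative terms in \eqref{EQJac1}--\eqref{EQJac3} do not cancel; they assemble into the Schouten square, which is precisely the content of the ``$\Longleftrightarrow$'' in the statement.

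As for what each approach buys: the paper's brute-force expansion would, if carried to the end, produce the explicit components of $[\pi,\pi]_\pi$ (the object it denotes $A_J$), but it cannot yield $0$ without a further hypothesis on the $\pi_{jk}$; your approach cleanly separates the formal part (Jacobiator $=\tfrac12[\pi,\pi]_\pi$, valid for any bivector) from the geometric part ($d\omega=0\Rightarrow[\pi,\pi]_\pi=0$ for $\pi=\omega^{-1}$), the latter being the classical symplectic argument adapted to bidegree $(1,1)$. Two points to tighten in your write-up: the final $d\omega=0$ computation is only sketched and should be carried out (or cited), and you should say explicitly that you are importing the closedness and non-degeneracy hypotheses into the theorem, since as stated it assumes only that $\pi$ is a $(1,1)$-vector field. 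Your contraction convention, giving the second term $-\partial_{\bar z_k}f\,\partial_{z_j}g$, is the correct evaluation of $\partial z_j\wedge\partial\bar z_k$ on $(df,dg)$; the paper's version with $-\partial_{\bar z_j}f\,\partial_{z_k}g$ agrees with it only when $(\pi_{jk})$ is symmetric.
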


\begin{proof}
\begin{enumerate}
\item[1)] 
\begin{eqnarray*}
\{f,g\}&=&\pi(df,dg)\nonumber\\
&=& \sum_{j,k=1}^n\pi_{jk}\Big(\partial z_jf\partial\overline{z}_kg-\partial\overline{z}_jf\partial z_kg\Big)
\end{eqnarray*}
\begin{eqnarray*}
\{g,f\}&=&\pi(dg,df)\nonumber\\
&=&-\sum_{j,k=1}^n\pi_{jk}\Big(\partial z_jf\partial\overline{z}_kg-\partial\overline{z}_jf\partial z_kg\Big)\nonumber\\
&=&-\{f,g\}.
\end{eqnarray*}
\item[2)] \begin{eqnarray*}
\{fg,h\}&=&\sum_{j,k}^n\pi_{jk}\Big(f\partial z_jg\partial\overline{z}_kh+g\partial z_jf\partial\overline{z}_kh-f\partial\overline{z}_jg\partial z_kh-g\partial\overline{z}_jf\partial z_kh\Big)\nonumber\\
&=&\sum_{j,k=1}^nf\pi_{jk}\Big(\partial z_jg\partial\overline{z}_kh-\partial\overline{z}_jg\partial z_kh\Big)+\sum_{j,k=1}^ng\pi_{jk}\Big(\partial z_jf\partial\overline{z}_kh-\partial\overline{z}_jf\partial z_kh\Big)\nonumber\\
&=&f\sum_{j,k=1}^n\pi_{jk}\Big(\partial z_jg\partial\overline{z}_kh-\partial\overline{z}_jg\partial z_kh\Big)+g\sum_{j,k=1}^n\pi_{jk}\Big(\partial z_jf\partial\overline{z}_kh-\partial\overline{z}_jf\partial z_kh\Big)\nonumber\\
&=&f\{g,h\}+g\{f,h\},
\end{eqnarray*}
hence the Leibniz's rule.
\item[3)] A straightforward calculus, give:
\begin{eqnarray}\label{EQJac1}
\{f,\{g,h\}&=&\sum_{j,k=1}^n\sum_{p,q}^n\pi_{jk}\partial\overline{z}_k\pi_{pq}\partial z_jf\partial z_pg\partial\overline{z}_qh-\sum_{j,k=1}^n\sum_{p,q}^n\pi_{jk}\partial\overline{z}_k\pi_{pq}\partial z_jf\partial \overline{z}_pg\partial z_qh\nonumber\\
& &-\sum_{j,k=1}^n\sum_{p,q=1}^n\pi_{jk}\partial z_k\pi_{pq}\partial\overline{z}_jf\partial z_pg\partial\overline{z}_qh+\sum_{j,k=1}^n\sum_{p,q=1}^n\pi_{jk}\partial z_k\pi_{pq}\partial\overline{z}_jf\partial \overline{z}_pg\partial z_qh\nonumber\\
& &+\sum_{j,k=1}^n\sum_{p,q=1}^n\pi_{jk}\pi_{pq}\partial z_jf\partial^2_{\overline{z}_kz_p}g\partial\overline{z}_qh-\sum_{j,k=1}^n\sum_{p,q=1}^n\pi_{jk}\pi_{pq}\partial z_jf\partial_{\overline{z}_p}g\partial^2_{z_k\overline{z}_q}h\nonumber\\
&&-\sum_{j,k}^n\sum_{p,q=1}^n\pi_{jk}\pi_{pq}\partial \overline{z}_jf\partial z_pg\partial^2_{\overline{z}_kz_q}h+\sum_{j,k}^n\sum_{p,q=1}^n\pi_{jk}\pi_{pq}\partial\overline{z} _jf\partial^2_{z_k\overline{z}_p}g\partial_{z_q}h,
\end{eqnarray}
\begin{eqnarray}\label{EQJac2}
\{g,\{h,f\}\}&=&\sum_{j,k=1}^n\sum_{p,q=1}^n\pi_{jk}\partial\overline{z}_k\pi_{pq}\partial z_jg\partial z_ph\partial\overline{z}_qf-\sum_{j,k=1}^n\sum_{p,q=1}^n\pi_{jk}\partial\overline{z}_k\pi_{pq}\partial z_jg\partial \overline{z}_ph\partial z_qf\nonumber\\
& &-\sum_{j,k=1}^n\sum_{p,q=1}^n\pi_{jk}\partial z_k\pi_{pq}\partial\overline{z}_jg\partial z_ph\partial\overline{z}_qf+\sum_{j,k=1}^n\sum_{p,q=1}^n\pi_{jk}\partial z_k\pi_{pq}\partial \overline{z}_jg\partial\overline{z}_ph\partial z_qf\nonumber\\
& &+\sum_{j,k=1}^n\sum_{p,q=1}^n\pi_{jk}\pi_{pq}\partial z_jg\partial^2_{\overline{z}_kz_p}h\partial\overline{z}_qf-\sum_{j,k=1}^n\sum_{p,q=1}^n\pi_{jk}\pi_{pq}\partial z_jg\partial\overline{z}_ph\partial^2_{z_k\overline{z}_q}f\nonumber\\
& &-\sum_{j,k=1}^n\sum_{p,q=1}^n\pi_{jk}\pi_{pq}\partial \overline{z}_jg\partial z_ph\partial^2_{\overline{z}_kz_q}f+\sum_{j,k=1}^n\sum_{p,q=1}^n\pi_{jk}\pi_{pq}\partial\overline{z}_jg\partial_{z_k\overline{z}_p}h\partial{z}_qf,
\end{eqnarray}
and
\begin{eqnarray*}
\{h,\{f,g\}\}&=&\sum_{j,k=1}^n\sum_{p,q=1}^n\pi_{jk}\partial\overline{z}_k\pi_{pq}\partial z_j h\partial z_pf\partial\overline{z}_qg-\sum_{j,k=1}^n\sum_{p,q=1}^n\pi_{jk}\partial\overline{z}_k\pi_{pq}\partial z_j h\partial \overline{z}_pf\partial z_qg\nonumber\\
& &-\sum_{j,k=1}^n\sum_{p,q=1}^n\pi_{jk}\partial z_k\pi_{pq}\partial\overline{z}_jh\partial z_pf\partial\overline{z}_qg+\sum_{j,k=1}^n\sum_{p,q=1}^n\pi_{jk}\partial z_k\pi_{pq}\partial \overline{z}_jh\partial \overline{z}_pf\partial z_qg\nonumber\\
& &+\sum_{j,k=1}^n\sum_{p,q=1}^n\pi_{jk}\pi_{pq}\partial z_jh\partial^2\overline{z}_kz_p f\partial\overline{z}_qg-\sum_{j,k=1}^n\sum_{p,q=1}^n\pi_{jk}\pi_{pq}\partial z_jh\partial\overline{ z}_pf\partial^2z_k\overline{z}_qg\nonumber
\end{eqnarray*}
\begin{eqnarray}\label{EQJac3}
& &-\sum_{j,k=1}^n\sum_{p,q=1}^n\pi_{jk}\pi_{pq}\partial\overline{z}_jh\partial z_pf\partial^2\overline{z}_kz_qg+\sum_{j,k=1}^n\sum_{p,q=1}^n\pi_{jk}\pi_{pq}\partial\overline{z}_jh\partial^2z_k\overline{z}_pf\partial z_qg.
\end{eqnarray}
By combining the equations  \eqref{EQJac1}, \eqref{EQJac2} and \eqref{EQJac3}, one get
\begin{eqnarray}
\{f,\{g,h\}\}+\{g,\{h,f\}\}+\{h,\{f,g\}\}&=&0,
\end{eqnarray}
hence one have the Jacobi's identity.
\end{enumerate}
\end{proof}

With $\pi$ such that the functions $\pi_{jk}$ define an Hermitian matrix, then we have the following proposition.

\begin{prop}
If the functions $\pi_{jk}$ define an Hermitian matrix, then the above Poisson tensor is real. 
\end{prop}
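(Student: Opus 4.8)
The plan is to show that $\pi$ is \emph{real} in the sense that it is fixed by complex conjugation, $\overline{\pi}=\pi$. Since conjugation of tensor fields is defined pointwise and is compatible with transition maps between holomorphic charts, it suffices to verify the identity in a single coordinate domain $U$ of $M$, working from the local expression \eqref{EQP1}.

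First I would record how conjugation acts on the coordinate frame, namely $\overline{\partial z_j}=\partial\overline{z}_j$ and $\overline{\partial\overline{z}_k}=\partial z_k$, so that applying conjugation to \eqref{EQP1} gives
\[
\overline{\pi}=\sum_{j,k=1}^n\overline{\pi_{jk}}\;\partial\overline{z}_j\wedge\partial z_k .
\]
Then I would reorganize the right-hand side so that the holomorphic slot comes first and the anti-holomorphic slot second, as in \eqref{EQP1}; this is the step where the skew-symmetry of $\wedge$ enters and the accompanying sign must be tracked. Next, relabelling the two dummy indices $j\leftrightarrow k$ converts the coefficient $\overline{\pi_{jk}}$ into $\overline{\pi_{kj}}$, and the hypothesis that $(\pi_{jk})$ is Hermitian, $\overline{\pi_{kj}}=\pi_{jk}$, allows replacing it by $\pi_{jk}$. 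Collecting the terms one recovers $\sum_{j,k}\pi_{jk}\,\partial z_j\wedge\partial\overline{z}_k=\pi$, which is the claim.

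It is worth stating the same fact at the level of the bracket: for arbitrary $f,g\in\mathscr{C}^\infty(U,\C)$ one has $\overline{\{f,g\}}=\{\overline{f},\overline{g}\}$, and in particular $\{f,g\}$ is real-valued whenever $f$ and $g$ are. This is proved by the same mechanism, starting from the explicit formula $\{f,g\}=\sum_{j,k}\pi_{jk}\big(\partial z_j f\,\partial\overline{z}_k g-\partial\overline{z}_j f\,\partial z_k g\big)$ established in the proof of the previous theorem, using $\overline{\partial z_j f}=\partial\overline{z}_j\overline{f}$, and then relabelling indices and invoking $\overline{\pi_{jk}}=\pi_{kj}$.

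The computation itself is elementary; the one genuinely delicate point — and the main obstacle — is the sign bookkeeping. One must check that the sign produced when the holomorphic and anti-holomorphic slots are interchanged under conjugation is exactly compensated by the index transposition forced by the Hermitian (conjugate-transpose) symmetry, so that the two operations combined leave $\pi$ invariant rather than returning $-\pi$; settling this sign is essentially the whole content of the proposition.
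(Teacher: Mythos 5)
Your strategy is the same as the paper's: conjugate the local expression \eqref{EQP1}, use $\overline{\partial z_j}=\partial\overline{z}_j$, reorder the wedge, relabel the dummy indices, and invoke the symmetry of the coefficient matrix. The problem is that the sign bookkeeping you yourself single out as ``essentially the whole content of the proposition'' comes out wrong in your write-up. Carrying out your steps explicitly,
\[
\overline{\pi}=\sum_{j,k=1}^n\overline{\pi_{jk}}\,\partial\overline{z}_j\wedge\partial z_k
=-\sum_{j,k=1}^n\overline{\pi_{jk}}\,\partial z_k\wedge\partial\overline{z}_j
=-\sum_{j,k=1}^n\overline{\pi_{kj}}\,\partial z_j\wedge\partial\overline{z}_k .
\]
If the matrix is Hermitian in the literal sense $\overline{\pi_{kj}}=\pi_{jk}$, the index transposition produces \emph{no} compensating sign, and you land on $\overline{\pi}=-\pi$, not $\overline{\pi}=\pi$. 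The reality of $\pi$ is equivalent to $\overline{\pi_{jk}}=-\pi_{kj}$, i.e.\ to the coefficient matrix being \emph{skew}-Hermitian ($i$ times a Hermitian matrix). That is precisely the relation the paper's own proof writes down ($\overline{\pi}_{jk}=-\pi_{kj}$), despite the word ``Hermitian'' in the statement, and it is what the example following the proposition satisfies: $\pi_{jk}=2iz_j\overline{z}_k$ is $2i$ times the Hermitian matrix $(z_j\overline{z}_k)$. This is also the usual convention under which a $(1,1)$-object is real, compare $\omega=\frac{i}{2}\sum_{j,k}h_{jk}\,dz_j\wedge d\overline{z}_k$ with $(h_{jk})$ Hermitian.

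The same sign problem affects your auxiliary claim about the bracket: under $\overline{\pi_{kj}}=\pi_{jk}$ one finds $\overline{\{f,g\}}=\{\overline{g},\overline{f}\}=-\{\overline{f},\overline{g}\}$, so the bracket of two real functions would be purely imaginary rather than real. In short, your argument as written does not establish the stated conclusion: either the hypothesis must be read as ``$\pi_{jk}=ih_{jk}$ with $(h_{jk})$ Hermitian'' (equivalently, $(\pi_{jk})$ skew-Hermitian), in which case your computation goes through with the extra minus sign absorbed, or else the correct conclusion is $\overline{\pi}=-\pi$. You need to fix the hypothesis and redo the final collection of terms accordingly.
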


\begin{proof}
Let $\pi = \sum\limits_{j,k=1}^n\pi_{jk}\partial z_j \wedge \partial \overline{z}_k$ be a $(1,1)$-vector field, where $\partial z = \frac{\partial}{\partial z}$ and $\overline{\pi}_{jk} = - \pi_{kj}$. Note that $[\pi_{jk}(z)]_{1\leq j,k\leq n}$ is an Hermitian matrix depending analytically of $z$. Hence, one has:
\begin{eqnarray*}
\overline{\pi}&=&\overline{\sum\limits_{j,k=1}^n\pi_{jk}\partial z_j\wedge\partial\overline{z}_k}=\sum\limits_{j,k=1}^n\overline{\pi}_{jk}\partial \overline{z}_j\wedge\partial z_k\\
&=&\sum\limits_{j,k=1}^n\pi_{kj}\partial z_k\wedge\partial\overline{z}_j\\
&=&\pi.
\end{eqnarray*}
\end{proof}

\begin{example}{\rm 
Let $U$ be an open subset of $\C^n$. Then, 
\begin{eqnarray*}
\pi=2i\sum_{j,k=1}^nz_j\overline{z}_k\partial z_j\wedge\partial\overline{z}_k
\end{eqnarray*}
is a $(1,1)$-real Poisson tensor.
\vskip 1mm
Indeed,
\begin{eqnarray*}
\overline{\pi}&=&\overline{2i\sum_{j,k=1}^nz_j\overline{z}_k\partial z_j\wedge\partial\overline{z}_k}=-2i\sum_{j,k=1}^n\overline{z}_jz_k\partial \overline{z}_j\wedge\partial z_k\\
&=&2i\sum_{j,k=1}^nz_k\overline{z_j}\partial z_k\wedge\partial\overline{z}_j\\
&=&\pi.
\end{eqnarray*}
} 
\end{example}

A vector field $X$ on a complex Poisson manifold $(M,\pi)$, is said to be a Poisson vector field if it satisfies the following condition:
\begin{eqnarray}
[X,\pi] = 0.
\end{eqnarray}

\begin{theorem}
Let $(M,\pi)$ be a complex Poisson manifold endowed with a $(1,1)$-vector field $\pi= \sum\limits_{j,k=1}^n\pi_{jk}\partial z_j \wedge \partial \overline{z}_k $. Let $X= \sum\limits_{j=1}^n X_j\partial z_j + \sum\limits_{k=1}^n\overline{X}_k\partial \overline{z}_k$ be a vector field on $TM$ such that the function $X_j$ are holomorphic and $\overline{X}_k$ antiholomorphic. Then, $X$ is a Poisson vector field on $(M,\pi)$, if and only if the functions $X_j$ and $\overline{X}_k$  satisfy the following equation :
\begin{eqnarray}\label{XP1}
\partial z_j\Big(\frac{X_j}{\pi_{jk}}\Big)+\partial\overline{z}_k\Big(\frac{\overline{X_k}}{\pi_{jk}}\Big)=0,\; \forall \, j,k=1,\dots, n.
\end{eqnarray}
\end{theorem}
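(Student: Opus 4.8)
The plan is to compute the bracket $[X,\pi]$, which for a vector field $X$ and the bivector $\pi$ is the Lie derivative $L_X\pi$, directly in the local holomorphic coordinates $(z_1,\dots,z_n)$, and then to read off its vanishing coefficient by coefficient. Writing $\pi=\sum_{j,k}\pi_{jk}\,\partial z_j\wedge\partial\overline{z}_k$ and using that $L_X$ is a derivation of the wedge product,
\begin{align*}
[X,\pi]=\sum_{j,k}X(\pi_{jk})\,\partial z_j\wedge\partial\overline{z}_k+\sum_{j,k}\pi_{jk}\,[X,\partial z_j]\wedge\partial\overline{z}_k+\sum_{j,k}\pi_{jk}\,\partial z_j\wedge[X,\partial\overline{z}_k].
\end{align*}

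First I would reduce the two bracket terms. Since $X=\sum_jX_j\,\partial z_j+\sum_k\overline{X}_k\,\partial\overline{z}_k$ with the $X_j$ holomorphic and the $\overline{X}_k$ antiholomorphic, one has $\partial\overline{z}_kX_j=0$ and $\partial z_j\overline{X}_k=0$ for all $j,k$; hence $[X,\partial z_j]=-\sum_\ell(\partial z_jX_\ell)\,\partial z_\ell$ involves only $(1,0)$-directions and $[X,\partial\overline{z}_k]=-\sum_m(\partial\overline{z}_k\overline{X}_m)\,\partial\overline{z}_m$ only $(0,1)$-directions, so $[X,\pi]$ is again a $(1,1)$-vector field, say $[X,\pi]=\sum_{j,k}\Theta_{jk}\,\partial z_j\wedge\partial\overline{z}_k$. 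The next step is to expand $X(\pi_{jk})$ and the two brackets, substitute them into the display, and collect the scalar coefficient $\Theta_{jk}$; after regrouping the holomorphic contribution to the $j$-slot together with the antiholomorphic contribution to the $k$-slot, one obtains
\begin{align*}
\Theta_{jk}=\pi_{jk}\big(\partial z_jX_j+\partial\overline{z}_k\overline{X}_k\big)-X_j\,\partial z_j\pi_{jk}-\overline{X}_k\,\partial\overline{z}_k\pi_{jk}.
\end{align*}

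Finally, since $\pi$ is non-degenerate the functions $\pi_{jk}$ occurring here are nowhere zero, so one may divide by $\pi_{jk}^2$; the quotient rule
\begin{align*}
\partial z_j\Big(\frac{X_j}{\pi_{jk}}\Big)=\frac{\pi_{jk}\,\partial z_jX_j-X_j\,\partial z_j\pi_{jk}}{\pi_{jk}^2},\qquad
\partial\overline{z}_k\Big(\frac{\overline{X}_k}{\pi_{jk}}\Big)=\frac{\pi_{jk}\,\partial\overline{z}_k\overline{X}_k-\overline{X}_k\,\partial\overline{z}_k\pi_{jk}}{\pi_{jk}^2}
\end{align*}
identifies $\Theta_{jk}/\pi_{jk}^2$ with $\partial z_j(X_j/\pi_{jk})+\partial\overline{z}_k(\overline{X}_k/\pi_{jk})$. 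Therefore $[X,\pi]=0$ holds if and only if $\Theta_{jk}=0$ for every $j,k$, i.e.\ if and only if \eqref{XP1} holds, which proves both implications simultaneously. I expect the real work to sit in the middle step: controlling the index bookkeeping of the Leibniz expansion and checking that, once the holomorphy cancellations are carried out, the coefficient $\Theta_{jk}$ collapses to the compact expression above — the hypotheses that the components of $X$ are holomorphic, resp.\ antiholomorphic, and that $\pi$ is non-degenerate are precisely what make this reduction, and the subsequent division by $\pi_{jk}^2$, legitimate.
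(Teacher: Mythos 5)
Your strategy is the same as the paper's: expand $[X,\pi]=L_X\pi$ by the Leibniz rule, use holomorphy of the $X_j$ and antiholomorphy of the $\overline X_k$ to kill the mixed brackets, and read off the coefficient of $\partial z_j\wedge\partial\overline z_k$. The gap sits exactly in the step you flag as ``the real work'' and then assert without carrying out: the coefficient does \emph{not} collapse to the diagonal expression you wrote. You correctly record $[X,\partial z_j]=-\sum_{\ell}(\partial z_jX_\ell)\,\partial z_\ell$ and $[X,\partial\overline z_k]=-\sum_{m}(\partial\overline z_k\overline X_m)\,\partial\overline z_m$, but substituting these into the Leibniz expansion and relabelling indices gives, as the coefficient of $\partial z_j\wedge\partial\overline z_k$,
\begin{align*}
\Theta_{jk}=\sum_{\ell=1}^n\Bigl(X_\ell\,\partial z_\ell\pi_{jk}-\pi_{\ell k}\,\partial z_\ell X_j\Bigr)+\sum_{m=1}^n\Bigl(\overline X_m\,\partial\overline z_m\pi_{jk}-\pi_{jm}\,\partial\overline z_m\overline X_k\Bigr),
\end{align*}
which retains all the off-diagonal contributions $\ell\neq j$ and $m\neq k$ that your compact formula discards. (The paper's own proof performs the same purely ``diagonal'' bookkeeping, so you have faithfully reproduced its argument, gap included.) Only for $n=1$ does $\Theta_{jk}$ reduce to $X_j\partial z_j\pi_{jk}-\pi_{jk}\partial z_jX_j+\overline X_k\partial\overline z_k\pi_{jk}-\pi_{jk}\partial\overline z_k\overline X_k=-\pi_{jk}^2\bigl(\partial z_j(X_j/\pi_{jk})+\partial\overline z_k(\overline X_k/\pi_{jk})\bigr)$, i.e.\ the negative of your $\Theta_{jk}$.

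That this is not cosmetic is seen already for $n=2$: take $\pi_{jk}$ constant and nondegenerate and $X=z_2\,\partial z_1$. Then $X_1=z_2$, $X_2=0$, $\overline X_k=0$, so $\partial z_j(X_j/\pi_{jk})+\partial\overline z_k(\overline X_k/\pi_{jk})=0$ for every $j,k$ and \eqref{XP1} holds; yet
\begin{align*}
[X,\pi]=\sum_{k=1}^2\pi_{2k}\,[X,\partial z_2]\wedge\partial\overline z_k=-\sum_{k=1}^2\pi_{2k}\,\partial z_1\wedge\partial\overline z_k\neq 0,
\end{align*}
so $X$ is not a Poisson vector field. Hence the asserted equivalence, and your proof of it, are valid only in complex dimension one or under extra hypotheses (e.g.\ each $X_j$ a function of $z_j$ alone and $\pi$ diagonal) that force the cross terms to vanish. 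To repair the argument you must either restrict the statement accordingly or replace \eqref{XP1} by the full system $\Theta_{jk}=0$ displayed above; the quotient-rule repackaging at the end is fine once the correct $\Theta_{jk}$ happens to be diagonal, but it cannot be applied to the general case.
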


\begin{proof}
Suppose $X'_j=X_j\partial z_j$, the we have:
\begin{eqnarray*}
[X'_j,\pi_{jk}\partial z_j\wedge\partial\overline{z}_k]&=&X'_j(\pi_{jk})\partial z_j\wedge\partial\overline{z}_k+\pi_{jk}[X'_j,\partial z_j]\wedge\partial\overline{z}_k+\pi_{jk}\partial z_j\wedge[X'_j,\partial\overline{z}_k].
\end{eqnarray*}
Let $f$ a smooth function on $M$.
\begin{eqnarray*}
[X'_j,\partial z_j](f)=[X_j\partial z_j,\partial z_j](f)=-\partial z_j(X_r)\partial z_r(f),
\end{eqnarray*}
and
\begin{eqnarray*}
[X'_j,\partial \overline{z}_k](f)=[X_j\partial z_j,\partial \overline{z}_k](f)=-\partial\overline{z}_k(X_j)\partial z_j(f)=0.
\end{eqnarray*}
Hence
\begin{eqnarray}\label{XP2}
[X'_j,\pi_{jk}\partial z_j\wedge\partial\overline{z}_k]&=&X'_j(\pi_{jk})\partial z_j\wedge\partial\overline{z}_k+\pi_{jk}\Big(-\partial z_j(X_j)\partial z_j\wedge \partial \overline{z}_k\Big).
\end{eqnarray}
Likewise, with $\overline{X}'_k = \overline{X}_k\partial\overline{z}_k$, one has:
\begin{eqnarray}\label{XP3}
[\overline{X'_k},\pi_{jk}\partial z_j\wedge\partial\overline{z}_k]&=&\overline{X'_k}(\pi_{jk})\partial z_j\wedge\partial\overline{z}_k+\pi_{jk}\Big(-\partial \overline{z}_k(\overline{X_k})\partial z_j\wedge \partial \overline{z}_k\Big).
\end{eqnarray}
By adding \eqref{XP2} and \eqref{XP3}, and  exchanging $j$ and $k$ we have
\begin{eqnarray*}
[X,\pi]&=&\sum_{j,k=1}^n\Big( X_j\partial z_j(\pi_{jk})\partial z_j\wedge\partial\overline{z}_k-\pi_{jk}\partial z_j(X_j)\partial z_j\wedge \partial \overline{z}_k\Big)\nonumber\\
& &+\sum_{k,j=1}^n\Big(\overline{X_k}\partial\overline{z}_k(\pi_{jk})\partial z_j\wedge\partial\overline{z}_k-\pi_{jk}\partial \overline{z}_k(\overline{X_k})\partial z_j\wedge \partial \overline{z}_k\Big).\nonumber\\
&=&-\sum_{j,k=1}^n\pi_{jk}^2\Big[\partial z_j\Big(\frac{X_j}{\pi_{jk}}\Big)+\partial\overline{z}_k\Big(\frac{\overline{X_k}}{\pi_{jk}}\Big)\Big]\partial z_j\wedge \partial \overline{z}_k.
\end{eqnarray*}
Which proof the theorem.
\end{proof}
As a consequence of this theorem, the following corollary give a characterization of holomorphic and antiholomoprhic Poisson vector field. 
\begin{corollary}
With the hypothesis as in the above theorem, an holomorphic vector field $X = \sum\limits_{j=1}^nX_j\partial z_j$ (respectively antiholomorphic vector field $X = \sum\limits_{k=1}^n\overline{X}_k\partial \overline{z}_k$) is a Poisson vector field, if:
\begin{eqnarray}
X_j = f(\overline{z}_j)\pi_{jk}\quad (\mbox{respectively}\; \overline{X}_k = g(z_k)\pi_{jk}),
\end{eqnarray}
where $f$ is an antiholomorphic function (respectively $g$ holomorphic).
\end{corollary}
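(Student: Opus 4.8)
The plan is to read the Corollary as an immediate specialization of the preceding Theorem: once the vector field $X$ is purely holomorphic (resp. purely antiholomorphic), the system \eqref{XP1} degenerates — one of the two terms disappears identically — and the proposed form of the coefficients is tailored precisely to annihilate the surviving term.

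First I would treat the holomorphic case $X=\sum_{j=1}^n X_j\partial z_j$. Here $\overline{X}_k\equiv 0$, so for every pair $(j,k)$ equation \eqref{XP1} reduces to
\[
\partial z_j\Big(\frac{X_j}{\pi_{jk}}\Big)=0 .
\]
Substituting the ansatz $X_j=f(\overline{z}_j)\pi_{jk}$ gives $X_j/\pi_{jk}=f(\overline{z}_j)$, a function of $\overline{z}_j$ alone; since $\partial/\partial z_j$ annihilates any function that does not depend on $z_j$, the left-hand side vanishes and the Theorem yields $[X,\pi]=0$, i.e. $X$ is a Poisson vector field.

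Then I would dispatch the antiholomorphic case $X=\sum_{k=1}^n\overline{X}_k\partial\overline{z}_k$ by the mirror argument. Now $X_j\equiv 0$, so \eqref{XP1} becomes $\partial\overline{z}_k\big(\overline{X}_k/\pi_{jk}\big)=0$, and the ansatz $\overline{X}_k=g(z_k)\pi_{jk}$ makes this ratio equal to $g(z_k)$, which $\partial/\partial\overline{z}_k$ kills whenever $g$ is holomorphic; again the Theorem gives $[X,\pi]=0$.

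Since each direction is a one-line substitution into \eqref{XP1}, there is no serious computational obstacle. The only point I would be careful to comment on is the internal consistency of the ansatz: a single function $f$ (resp. $g$) is asked to yield the holomorphic coefficient $X_j=f(\overline{z}_j)\pi_{jk}$ simultaneously for all $k$, which tacitly constrains $\pi$ (the ratios $\pi_{jk}/\pi_{jk'}$ must be independent of $\overline{z}_j$, and $f(\overline{z}_j)\pi_{jk}$ must indeed be holomorphic). I would either phrase the statement componentwise, with $j,k$ fixed so that \eqref{XP1} is a genuine single scalar equation, or restrict to the class of Poisson tensors $\pi$ for which the compatibility holds — in particular the ones appearing in the example of Section~5.
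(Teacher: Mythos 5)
Your proof is correct and matches the paper's intent exactly: the paper offers no separate argument for this corollary, presenting it as an immediate consequence of the preceding theorem, and your substitution of the ansatz into the condition \eqref{XP1} (where the vanishing of $\overline{X}_k$, resp.\ $X_j$, kills one term and the antiholomorphy of $f$, resp.\ holomorphy of $g$, kills the other) is precisely that specialization. Your closing caveat about the internal consistency of the ansatz --- a single $f$ must produce $X_j=f(\overline{z}_j)\pi_{jk}$ for every $k$, which tacitly constrains $\pi$ --- is a legitimate observation that the paper does not address.
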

Let $M$ be an analytic complex manifold and $J:TM \to TM$ an almost complex structure such that $J^2 = - {\rm Id}$. For $X,Y \in T_zM$, one has the integrability's conditions:
\begin{enumerate}
\item[1)] $[JX,Y]+[X,JY]=0$.
\item[2)] $J\Big([JX,Y]+[X,JY]\Big)=[JX,JY]-[X,Y]$.
\end{enumerate}
Note that this conditions are equivalent to the vanishing of the Nihenjuis bracket:
$$N(X,Y]=[JX,JY]-[X,Y]-J[JX,Y]-J[X,JY]=0.$$
Using the conventional equality $z_j=x_j+iy_j$, $x,y\in\R$, one has:
\begin{eqnarray*}
\frac{\partial}{\partial z_j}&=&\frac{1}{2}\Big(\frac{\partial}{\partial x_j}-i\frac{\partial}{\partial y_j}\Big),\qquad  \frac{\partial}{\partial \overline{z}_j}\,=\,\frac{1}{2}\Big(\frac{\partial}{\partial x_j}+i\frac{\partial}{\partial y_j}\Big)
\end{eqnarray*}
and $J\Big(\frac{\partial}{\partial x_j}\Big))=\frac{\partial }{\partial y_j}$, $J\Big(\frac{\partial}{\partial y_j}\Big)=-\frac{\partial}{\partial x_j}$ (like a retrograde  rotation of $\frac{\pi}{2}$). Hence we have:
\begin{eqnarray*}
J\Big(\frac{\partial}{\partial z_j}\Big)&=&\frac{1}{2}\Big(\frac{\partial}{\partial y_j}+i\frac{\partial}{\partial x_j}\Big)=\frac{i}{2}\Big(\frac{\partial}{\partial x_j}-i\frac{\partial}{\partial y_j}\Big)=i\frac{\partial}{\partial z_j}\\
J\Big(\frac{\partial}{\partial \overline{z}_j}\Big)&=&\frac{1}{2}\Big(\frac{\partial}{\partial y_j}-i\frac{\partial}{\partial x_j}\Big)=-\frac{i}{2}\Big(\frac{\partial}{\partial x_j}+i\frac{\partial}{\partial y_j}\Big)=-i\frac{\partial}{\partial\overline{z}_j}.
\end{eqnarray*}

\begin{prop}
Let $(M,\pi)$ be a complex Poisson manifold. Then $\pi$ is $J$-invariant.
\end{prop}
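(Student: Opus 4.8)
The plan is first to make precise what $J$-invariance of a bivector should mean. Regarding $J$ as a $(1,1)$-tensor on $M$, it extends to $TM\otimes TM$, and hence to $\Lambda^2TM$, by acting on each factor; I will declare $\pi$ to be $J$-invariant if $(J\otimes J)\pi=\pi$, equivalently if $\pi(J^\ast\alpha,J^\ast\beta)=\pi(\alpha,\beta)$ for all $1$-forms $\alpha,\beta$, where $J^\ast\alpha=\alpha\circ J$. Since both sides of this identity are tensorial in $\alpha,\beta$, and since $J$ is a globally defined tensor, it suffices to check the identity on the elementary bivectors $\dfrac{\partial}{\partial z_j}\wedge\dfrac{\partial}{\partial\overline z_k}$ occurring in the local expression \eqref{EQP1} of $\pi$.

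Then I would simply invoke the two formulas computed just above the statement, namely $J\!\left(\dfrac{\partial}{\partial z_j}\right)=i\,\dfrac{\partial}{\partial z_j}$ and $J\!\left(\dfrac{\partial}{\partial\overline z_k}\right)=-i\,\dfrac{\partial}{\partial\overline z_k}$. Applying $J$ to each leg of $\partial z_j\wedge\partial\overline z_k$ gives $J(\partial z_j)\wedge J(\partial\overline z_k)=(i\,\partial z_j)\wedge(-i\,\partial\overline z_k)=(-i^2)\,\partial z_j\wedge\partial\overline z_k=\partial z_j\wedge\partial\overline z_k$, because $i\cdot(-i)=1$. Multiplying by $\pi_{jk}$ and summing over $j,k$ yields $(J\otimes J)\pi=\pi$, which is the claim. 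Conceptually this is nothing but the statement that $T^{1,0}M$ and $T^{0,1}M$ are the $(\pm i)$-eigenbundles of (the complex-linear extension of) $J$, so on a section of $T^{1,0}M\otimes T^{0,1}M$ — which is exactly what a $(1,1)$-vector field is — the operator $J\otimes J$ acts as the identity.

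I do not expect any genuine obstacle here: the entire content is the cancellation of the factors $i$ and $-i$, and the only points to handle carefully are stating the definition of $J$-invariance unambiguously and observing that it is enough to test it on a local frame. If it seems worthwhile, one can append the dual remark that the associated $(1,1)$-form $\sum_{j,k}\pi_{jk}\,dz_j\wedge d\overline z_k$ is likewise $J$-invariant, by the same computation applied to $J^\ast(dz_j)=i\,dz_j$ and $J^\ast(d\overline z_k)=-i\,d\overline z_k$.
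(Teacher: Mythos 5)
Your proof is correct and follows essentially the same route as the paper: both apply $J$ legwise to $\partial z_j\wedge\partial\overline z_k$ using the eigenvalue formulas $J(\partial z_j)=i\,\partial z_j$ and $J(\partial\overline z_k)=-i\,\partial\overline z_k$, so that the factors $i$ and $-i$ cancel. Your added care in defining $J$-invariance as $(J\otimes J)\pi=\pi$ and the eigenbundle remark are welcome but do not change the argument.
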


\begin{proof}
\begin{eqnarray*}
J\pi&=&J\Big(\sum\limits_{j,k=1}^n\pi_{jk}\partial z_j\wedge\partial\overline{z}_k\Big)=\sum\limits_{j,k=1}^n\pi_{jk}J\Big(\partial z_j\wedge\partial\overline{z}_k\Big)\\
&=&\sum\limits_{j,k=1}^n\pi_{jk}J(\partial z_j)\wedge J(\partial\overline{z}_k)=\sum\limits_{j,k=1}^n\pi_{jk}(i\partial z_j)\wedge(-i\partial\overline{z}_k)\\
&=&\sum\limits_{j,k=1}^n\pi_{jk}\partial z_j\wedge\partial\overline{z}_k\\
&=&\pi.
\end{eqnarray*}
\end{proof}

\begin{remark}
The Poisson bracket of two complex functions $f$ and $g$ on $M$ is giving by:
$$\{f ,g\} = \pi(df ,dg).$$
In local coordinates, one has:
$$\{f ,g\} =\sum_{j,k=1}^n\pi_{jk}\Big(\frac{\partial f}{\partial z_j}\frac{\partial g}{\partial\overline{z}_k}-\frac{\partial f}{\partial\overline{ z}_j}\frac{\partial g}{\partial z_k}\Big).$$
If $f$ and $g$ are both holomophic or both antiholomorphic, then $\{f ,g\}=0$. 
\vskip 1mm
\noindent
Recall that the vector  field $X_f$ of $f$ is giving by:
$$X_f (g) = \{f ,g\}$$ 
is called Hamiltonian vector field and in local coordinates it can be expressed by:
$$X_f=\sum_{k=1}^n\Big[\sum_{j=1}^n\pi_{jk}\Big(\frac{\partial f}{\partial z_j}\frac{\partial }{\partial\overline{z}_k}-\frac{\partial  f}{\partial\overline{ z}_j}\frac{\partial }{\partial z_k}\Big)\Big].$$
If $f$ is an holomorphic (respectively antiholomorphic) function, then we have:
$$X_f=\sum_{k=1}^n\Big(\sum_{j=1}^n\pi_{jk}\frac{\partial f}{\partial z_j}\frac{\partial }{\partial\overline{z}_k}\Big ) \quad (\mbox{respectively} \quad X_f=\sum_{j=1}^n\Big(\sum_{k=1}^n\overline{\pi}_{kj}\frac{\partial  f}{\partial\overline{ z}_j}\frac{\partial }{\partial z_k}\Big)).$$
\end{remark}

\section{Compatibility of Poisson and Riemann complex structures }
Let $h$ be a Hermitian metric on a complex manifold $M$ locally define by:
\begin{eqnarray}
h=\sum\limits_{j,k=1}^nh_{jk}dz_j\otimes d\overline{z}_k,
\end{eqnarray}
where the functions $h_{jk}$ defined a Hermitian matrix and:
$$dz\otimes d\overline{z}_k(\partial z_j,\partial z_k)=dz_j(\partial z_j)\cdot d\overline{z}_k(\partial\overline{z}_k)=\delta_{jk};$$
$$dz\otimes d\overline{z}_k(\partial z_j,\partial \overline{z}_k)=dz_j(\partial z_j)\cdot d\overline{z}_k(\partial z_k)=0.$$

\begin{lemma}\label{LemH}
$h$ is $J$-invariant.
\end{lemma}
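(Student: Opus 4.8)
The plan is to mirror the proof of the Proposition just above (that $\pi$ is $J$-invariant), simply transferring the computation from the tangent side to the cotangent side. The first step is to record how $J$ acts on the coordinate differentials. Since on vector fields one has $J(\partial z_j)=i\,\partial z_j$ and $J(\partial\overline{z}_k)=-i\,\partial\overline{z}_k$, the induced action on $1$-forms, namely $(J\alpha)(X)=\alpha(JX)$, applied to the coframe dual to $\{\partial z_j,\partial\overline{z}_k\}$ gives
\[
J(dz_j)=i\,dz_j,\qquad J(d\overline{z}_k)=-i\,d\overline{z}_k .
\]

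Next I would extend $J$ to the $(1,1)$-tensor $h$ slotwise, exactly as the factorization $J(\partial z_j\wedge\partial\overline{z}_k)=J(\partial z_j)\wedge J(\partial\overline{z}_k)$ was used for $\pi$, so that $J(dz_j\otimes d\overline{z}_k)=J(dz_j)\otimes J(d\overline{z}_k)$. The two scalar factors then multiply to $i\cdot(-i)=1$, hence each basic tensor $dz_j\otimes d\overline{z}_k$ is fixed by $J$. Since the coefficients $h_{jk}$ are functions, they are untouched, and summing yields
\[
Jh=\sum_{j,k=1}^n h_{jk}\,J(dz_j)\otimes J(d\overline{z}_k)=\sum_{j,k=1}^n h_{jk}\,(i\,dz_j)\otimes(-i\,d\overline{z}_k)=\sum_{j,k=1}^n h_{jk}\,dz_j\otimes d\overline{z}_k=h,
\]
which is the claim. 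Equivalently, evaluating on vector fields this says $h(JX,JY)=h(X,Y)$ for all $X,Y\in TM$.

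There is essentially no obstacle here; the one point requiring a moment of care is fixing the convention for the action of $J$ on $1$-forms so that it is consistent with the one already adopted for $\pi$ — once that is pinned down, the cancellation $i\cdot(-i)=1$ is immediate and the verification is a single display. If the intended meaning of ``$J$-invariant'' is taken directly as $h(JX,JY)=h(X,Y)$, I would instead expand $X=\sum_j a_j\,\partial z_j+\sum_k b_k\,\partial\overline{z}_k$ and $Y$ similarly, and run the same two-factor cancellation on the evaluation $h(JX,JY)$, again using $J(\partial z_j)=i\,\partial z_j$ and $J(\partial\overline{z}_k)=-i\,\partial\overline{z}_k$.
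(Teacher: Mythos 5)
Your proof is correct and amounts to the same computation as the paper's: the paper verifies $h(JX,JY)=h(X,Y)$ by expanding $X,Y$ in the coordinate frame and using $h(\partial z_j,\partial\overline{z}_k)=0$, while you act dually on the coframe, and in both cases the whole argument reduces to the cancellation $i\cdot(-i)=1$ on each basic term $dz_j\otimes d\overline{z}_k$, exactly as in the preceding proposition on $J\pi=\pi$. The evaluation-on-vector-fields variant you mention at the end is precisely the paper's own proof.
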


\begin{proof}
For $X,Y\in TM$, i.e
$$X=\sum_{j=1}^n(X_j\partial z_j+\overline{X}_j\partial\overline{z}_j),\qquad\,Y=\sum_{k=1}^n(Y_k\partial z_k+\overline{Y}_k\partial\overline{z}_k),$$
one has
$$h(X,Y)=\sum_{j,k=1}^n(X_j\overline{Y}_k+\overline{X}_jY_k).$$
Other where, on has $$JX=i\sum_{j=1}^n\Big(X_j\partial z_j-\overline{X}_j\partial\overline{z}_j\Big),\qquad\,JY=i\sum_{k=1}^n\Big(Y_k\partial z_k-\overline{Y}_k\partial\overline{z}_k\Big).$$
Hence,
\begin{eqnarray}\label{EqH1}
h(JX,JY)&=&\sum_{j,k=1}^nh(iX_j\partial z_j-i\overline{X}_j\partial\overline{z}_j,iY_k\partial z_k-i\overline{Y}_k\partial\overline{z}_k)\nonumber\\
&=&-i^2\sum_{j,k=1}^nh(X_j\partial z_j-\overline{X}_j\partial\overline{z}_j,Y_k\partial z_k-\overline{Y}_k\partial\overline{z}_k)\nonumber\\
&=&\sum_{j,k=1}^n\Big(h(X_j \partial z_j,Y_k\partial z_k)+h(\overline{X}_j\partial\overline{z}_j,\overline{Y}_k\partial\overline{z}_k)\Big)
\end{eqnarray}
because $h(\partial z_k,\partial \overline{z}_k)=0$.
\vskip 1mm
Likewise,
\begin{eqnarray}\label{EqH2}
h(X,Y)&=&\sum_{j,k=1}^nh(X_j\partial z_j+\overline{X}_j\partial\overline{z}_j,Y_k\partial z_k+\overline{Y}_k\partial\overline{z}_k)\nonumber\\
&=&\sum_{j,k=1}^n\Big(h(X_j \partial z_j,Y_k\partial z_k)+h(\overline{X}_j\partial\overline{z}_j,\overline{Y}_k\partial\overline{z}_k)\Big).
\end{eqnarray}
By \eqref{EqH1} and \eqref{EqH2}, we have $h(JX,JY)=h(X,Y)$.
\end{proof} 
 
Form the Lemma \ref{LemH}, $h$ can be writ as the some of a Hermitian form $h_1$ on $T^{1,0}M$ and a Hermitian form $h_2$ on $T^{0,1}M$, i.e.
$$h =h_1+h_2.$$

This Hermitian form induced a Riemannian metric $g$, on $TM$ by:
  $$g(X,Y) = \frac{1}{2}\Big( h + \overline{h}\Big)(X,Y), \qquad \forall \, X,Y \in TM;$$
  and a symplectic form $\omega$ on $TM$ by:
  $$\omega(X,Y) = g(X,JY), \qquad \forall \, X,Y \in TM.$$
  Locally, the symplectic $(1,1)$-form $\omega$ can be expressed as:
$$\omega = \frac{i}{2}\sum_{j,k=1}^nh_{jk}dz_j\wedge d\overline{z}_k.$$  

\begin{remark}
The Hermitian form $h$ can be decomposed as:
$$h= g + i\omega.$$
\end{remark}
 The following result is well know \cite[Chapter 5]{D.all}:
\begin{prop}\label{Prop}
Let $M$ be a complex manifold endowed with a symplectic form $\omega$ and a Riemannian metric $g$. If $J$ is integrable, then $(M,g,\omega, J)$ is a complex Riemann-K\"ahler manifold. 
\end{prop}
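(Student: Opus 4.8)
The plan is to check that the quadruple $(M,g,\omega,J)$ satisfies the defining axioms of a K\"ahler manifold, so that the conclusion follows from the classical characterisation of the K\"ahler condition. Recall that a K\"ahler manifold is a complex manifold $(M,J)$ ($J$ integrable) together with a $J$-invariant Riemannian metric $g$ whose fundamental $2$-form $X,Y\mapsto g(X,JY)$ is closed. Among these requirements, integrability of $J$ is a hypothesis, and closedness of $\omega$ is immediate since $\omega$ is assumed symplectic, hence $d\omega=0$ and $\omega$ is non-degenerate. Thus the only real work is to see that $g$ is $J$-invariant and that $\omega$ is indeed the fundamental $2$-form attached to $(g,J)$.

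First I would record $J$-invariance of $g$. By construction $g=\frac12(h+\overline h)$, and Lemma \ref{LemH} gives $h(JX,JY)=h(X,Y)$; applying conjugation yields the same identity for $\overline h$, hence $g(JX,JY)=g(X,Y)$ for all $X,Y\in TM$. Consequently $(g,J)$ is an almost Hermitian structure, and the positivity needed for the K\"ahler metric is inherited from $g$ being Riemannian. Next I would verify that $\omega(X,Y)=g(X,JY)$ is the associated fundamental form: skew-symmetry follows from $J$-invariance and $J^2=-\mathrm{Id}$, since
\[
\omega(Y,X)=g(Y,JX)=g(JY,JJX)=g(JY,-X)=-g(X,JY)=-\omega(X,Y),
\]
and comparison with the local expression $\omega=\tfrac{i}{2}\sum_{j,k}h_{jk}\,dz_j\wedge d\overline z_k$ (obtained from $J\partial z_j=i\partial z_j$, $J\partial\overline z_j=-i\partial\overline z_j$) identifies it, up to the usual normalisation factor, with the standard $(1,1)$-form of the Hermitian metric $h=g+i\omega$.

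Having assembled these facts — $J$ integrable, $g$ Riemannian and $J$-invariant, $\omega(\cdot,\cdot)=g(\cdot,J\cdot)$ its fundamental form, and $d\omega=0$ — the complex manifold $M$ carries a Hermitian metric with closed fundamental form, which by definition (equivalently, by the classical equivalence $d\omega=0\iff\nabla J=0$ for the Levi-Civita connection $\nabla$ of $g$; see \cite[Chapter 5]{D.all}) makes $(M,g,\omega,J)$ a Riemann-K\"ahler manifold. The step I expect to be the crux is not a computation but the invocation of that classical equivalence: everything preceding it is bookkeeping with the identities for $J$ established earlier, whereas passing from $d\omega=0$ to the parallelism of the complex structure is the one non-formal ingredient, and I would cite it rather than reprove it.
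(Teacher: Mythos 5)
Your proposal is correct and follows essentially the same route as the paper: both arguments reduce the claim to the standard compatibility conditions among $(g,\omega,J)$ (namely $\omega(\cdot,\cdot)=g(\cdot,J\cdot)$ together with $J$-invariance of $g$) and then invoke the classical characterisation of the K\"ahler condition via $d\omega=0$ and integrability of $J$, citing \cite[Chapter~5]{D.all}. If anything, your write-up is the more complete of the two, since you explicitly verify the $J$-invariance of $g$ from Lemma \ref{LemH} and the skew-symmetry of $\omega$, which the paper's proof leaves implicit.
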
 

\begin{proof}
If we denote $\sharp_g$ and $\sharp_\omega$ the musicals isomorphism induced respectively by the metric $g$ and the symplectic form $\omega$, then the triple $(g,\omega, J)$ is said to be compatible if and only if, any one of the three structures can be specified by:
\begin{eqnarray*}
g (X, Y) &=& \omega (X, JY)\\
\omega( X , Y) &=& g ( JX , Y)\\
J (X) &=& (\sharp_ g )^{-1} \Big( \sharp _\omega( X )\Big).
\end{eqnarray*}
Hence  $\omega$ and $J$ are compatible if and only if $\omega(\cdot, J\cdot)$ is a Riemannian metric. Using some properties of the symplectic form $\omega$, one can stat that an almost complatible structure $J$ is an almost K\"ahler structure for the Riemannian metric  $\omega(\cdot, J\cdot)$.
\end{proof}
  
\begin{theorem}
Any Hermitian manifold is a complex Riemann-K\"ahler manifold.
\end{theorem}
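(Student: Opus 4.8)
The plan is to reduce the statement to Proposition~\ref{Prop} by producing, on an arbitrary Hermitian manifold $(M,h)$, the compatible triple $(g,\omega,J)$ that proposition requires. First I would invoke the construction carried out in the discussion preceding Proposition~\ref{Prop}: by Lemma~\ref{LemH} the Hermitian metric $h=\sum_{j,k=1}^n h_{jk}\,dz_j\otimes d\overline{z}_k$ is $J$-invariant, hence splits as $h=h_1+h_2$ with $h_1$ on $T^{1,0}M$ and $h_2$ on $T^{0,1}M$, and it determines the Riemannian metric $g=\tfrac12(h+\overline{h})$ together with the $(1,1)$-form $\omega=\tfrac{i}{2}\sum_{j,k=1}^n h_{jk}\,dz_j\wedge d\overline{z}_k$, so that $h=g+i\omega$. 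The first block of the argument is then pure verification: $g$ is symmetric and positive definite because $(h_{jk})$ is a positive Hermitian matrix, and $\omega$ is a non-degenerate $2$-form because $\det(h_{jk})\neq 0$. Since $M$ is an analytic complex manifold, $J$ is integrable, i.e. the Nijenhuis tensor recalled above vanishes identically.

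Next I would check the three compatibility identities appearing in the proof of Proposition~\ref{Prop}, namely $g(X,Y)=\omega(X,JY)$, $\omega(X,Y)=g(JX,Y)$ and $J=(\sharp_g)^{-1}\circ\sharp_\omega$. All three are formal consequences of the very definition $\omega(X,Y)=g(X,JY)$, of $J^2=-\mathrm{Id}$, and of the $J$-invariance of $g$ — which is inherited from that of $h$ by Lemma~\ref{LemH}; concretely one expands $X=\sum_j(X_j\partial z_j+\overline{X}_j\partial\overline{z}_j)$, uses $J(\partial z_j)=i\,\partial z_j$ and $J(\partial\overline{z}_j)=-i\,\partial\overline{z}_j$, and cancels exactly as in the proof of Lemma~\ref{LemH}. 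With $g$ Riemannian, $\omega$ symplectic, $J$ integrable, and the triple $(g,\omega,J)$ compatible, Proposition~\ref{Prop} applies directly and yields that $(M,g,\omega,J)$ is a complex Riemann-K\"ahler manifold, which is the assertion.

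The delicate point, and the one I expect to be the genuine obstacle, is the legitimacy of calling $\omega$ a \emph{symplectic} form, i.e. establishing $d\omega=0$: for an arbitrary Hermitian metric this closedness fails, and in the classical vocabulary a Hermitian manifold is K\"ahler precisely when it holds. So the honest proof has to resolve this in one of two ways. Either (i) one reads ``symplectic form'' in Proposition~\ref{Prop} and in the notion of Riemann-K\"ahler manifold in the weak sense of ``non-degenerate $2$-form'' (almost-symplectic), and then the two paragraphs above go through verbatim; or (ii) one uses the standing hypothesis announced in the abstract and introduction that the generating $(1,1)$-form is $d$-closed, equivalently that $\sum_{j,k}h_{jk}\,dz_j\wedge d\overline{z}_k$ is $d$-closed, whence $d\omega=\tfrac{i}{2}\sum_{j,k}dh_{jk}\wedge dz_j\wedge d\overline{z}_k=0$ by the formula for the differential of a $(1,1)$-form recorded earlier. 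I would present option (ii), since it is the setting the paper has set up; under it the remaining steps are routine bookkeeping.
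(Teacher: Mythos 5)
Your proof follows essentially the same route as the paper, whose entire argument is the single sentence that the theorem is an application of Lemma \ref{LemH} and Proposition \ref{Prop}; your first two paragraphs simply make that reduction explicit. The point you single out about $d\omega=0$ is well taken: the paper never verifies closedness, and neither Proposition \ref{Prop} nor its proof actually uses it, so the statement is true only if ``symplectic'' is read as ``non-degenerate $(1,1)$-form'' or if one imposes the $d$-closedness hypothesis announced in the abstract, as in your option (ii). That repair is needed for the classical reading of the theorem, and the paper itself leaves it implicit.
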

\begin{proof}
It's just an application of the  Lemme \ref{LemH} and above Proposition \ref{Prop}.
\end{proof}

\begin{definition}
Let $(M,g,\pi)$ be a complex manifold endowed with a Riemannian metric $g$ and a Poisson tensor $\pi$. $(M,g,\pi)$ is said to be a complex Riemann-Poisson manifold, if 
\begin{eqnarray}\label{ED2}
{D}_\alpha \pi(\beta,\gamma) := \pi_\# (\alpha)\cdot\pi(\beta,\gamma)-\pi({D}_\alpha \beta,\gamma)-\pi(\beta,{D}_\alpha \gamma) = 0,
\end{eqnarray}
where $\alpha,\beta,\gamma \in \Omega^1(M)$.
\end{definition}
\begin{remark}[\cite{AHB}]\label{rem}
Since $D$ is torsion free, on can obtain
\begin{eqnarray}
0=-[\pi,\pi](\alpha,\beta,\gamma)=D\pi(\alpha,\beta,\gamma)+D\pi(\beta,\gamma,\alpha)+D\pi(\gamma,\alpha,\beta)\\
D\pi(\gamma,\alpha,\beta)=-d\gamma\Big(\pi_\sharp(\alpha),\pi_\sharp(\beta)\Big)-\pi(D_\alpha\gamma,\beta)-\pi(\alpha,D_\beta\gamma)\\
\pi(D_\alpha\beta)-\pi(D_\beta\alpha)=[\pi_\sharp(\alpha),\pi_\sharp(\beta)].
\end{eqnarray}
\end{remark}
The following proposition is an analogous result of characterizing of pseudo-Riemannian-Poisson manifold in \cite[proposition 2.1]{MB3}. 
\begin{prop}
Let $(M, \pi, g^\ast)$ be a complex Poisson manifold endowed with a Riemannian metric on $T^*M$. Let $D$ be  the Levi-Civta contravaiant connection associated with the couple $(\pi, g^\ast)$. Then the following assertions are equivalent.
\begin{enumerate}
\item The triplet $(M, \pi, g^\ast)$ is a complex Riemannian-Poisson manifold.
\item For all $\alpha,\beta \in \Omega^1(M)$ and all $f\in \mathscr{C}^\infty(M)$, 
\begin{eqnarray}
\pi(D_\alpha df, \beta) + \pi(\alpha, D_\beta df) =0.
\end{eqnarray}
\item For all $\alpha, \beta, \gamma \in \Omega^1(M)$,
\begin{eqnarray}
d\gamma (\pi(\alpha),\pi( \beta)) + \pi(D_\alpha\gamma, \beta,) + \pi(\alpha,D_\beta\gamma )= 0. 
\end{eqnarray}
\end{enumerate} 
\end{prop}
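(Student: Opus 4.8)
The plan is to prove the equivalences via the two links $(1)\Leftrightarrow(3)$ and $(2)\Leftrightarrow(3)$; throughout I write
$$\Phi(\alpha,\beta,\gamma):=d\gamma\big(\pi_\#(\alpha),\pi_\#(\beta)\big)+\pi(D_\alpha\gamma,\beta)+\pi(\alpha,D_\beta\gamma),$$
so that assertion (3) is exactly the statement $\Phi\equiv 0$. First I would settle $(1)\Leftrightarrow(3)$: by the definition of $D\pi$ in \eqref{ED2} together with Remark~\ref{rem} — whose only input is that the Levi-Civita contravariant connection $D$ is torsion free — one has the identity $D\pi(\gamma,\alpha,\beta)=-\Phi(\alpha,\beta,\gamma)$ for all $\alpha,\beta,\gamma\in\Omega^1(M)$. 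Since $D\pi$ is $\mathscr{C}^\infty(M)$-trilinear, it vanishes identically as a tensor if and only if $D\pi(\gamma,\alpha,\beta)=0$ for all $\alpha,\beta,\gamma$, i.e. if and only if $\Phi\equiv 0$; thus $(M,\pi,g^\ast)$ is a complex Riemann-Poisson manifold if and only if (3) holds.

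The implication $(3)\Rightarrow(2)$ is then immediate: taking $\gamma=df$ with $f\in\mathscr{C}^\infty(M)$ gives $d\gamma=d(df)=0$, so $\Phi(\alpha,\beta,df)=\pi(D_\alpha df,\beta)+\pi(\alpha,D_\beta df)$, which vanishes by (3) — and this is precisely (2).

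The substantive step is $(2)\Rightarrow(3)$, and the key point is that $\Phi$ is a tensor, i.e. $\mathscr{C}^\infty(M)$-linear in each of its three arguments. Linearity in $\alpha$, and symmetrically in $\beta$, is clear since $D$ is $\mathscr{C}^\infty(M)$-linear in its lower index, $\pi_\#$ is $\mathscr{C}^\infty(M)$-linear, and $\pi$ is function-bilinear. Linearity in $\gamma$ is the one computation requiring care: replacing $\gamma$ by $h\gamma$ and using the Leibniz rule $D_\alpha(h\gamma)=(\pi_\#(\alpha)\cdot h)\gamma+hD_\alpha\gamma$ together with $d(h\gamma)=dh\wedge\gamma+h\,d\gamma$, one collects correction terms built from $dh(\pi_\#(\alpha))=\pi_\#(\alpha)\cdot h$ and the pairings $\gamma(\pi_\#(\beta))=\pi(\beta,\gamma)$, $\gamma(\pi_\#(\alpha))=\pi(\alpha,\gamma)$, and a short cancellation yields $\Phi(\alpha,\beta,h\gamma)=h\,\Phi(\alpha,\beta,\gamma)$. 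Granting tensoriality I conclude: at each point $p$ any covector is of the form $df(p)$ for a suitable $f\in\mathscr{C}^\infty(M)$ (use a holomorphic chart and a bump function), and on such exact $\gamma$ the term $d\gamma$ drops out, so $\Phi(\alpha,\beta,df)$ coincides with the left-hand side of (2) and hence vanishes; therefore $\Phi\equiv 0$, which is (3). Putting $(1)\Leftrightarrow(3)$, $(3)\Rightarrow(2)$ and $(2)\Rightarrow(3)$ together gives the equivalence of the three assertions. I expect the only genuine difficulty to be the sign bookkeeping in the $\gamma$-linearity step — keeping the conventions for $\pi_\#$ and for $d\gamma(X,Y)=X\!\cdot\!\gamma(Y)-Y\!\cdot\!\gamma(X)-\gamma([X,Y])$ mutually consistent so that the correction terms really cancel.
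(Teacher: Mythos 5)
Your proof is correct, and it reorganizes the argument in a way that differs noticeably from the paper's. The paper proves the cycle $1)\Rightarrow 2)\Rightarrow 3)\Rightarrow 1)$: its $1)\Rightarrow 2)$ is a direct computation identifying $\pi(D_\alpha df,\beta)+\pi(\alpha,D_\beta df)$ with $-\mathcal{L}_{\pi_\sharp(df)}\pi(\alpha,\beta)$ via a bracket identity from [AV] and then arguing that Hamiltonian vector fields preserve $\pi$; its $2)\Rightarrow 3)$ is dispatched in a single sentence (``by choosing $\gamma=df$''); and its $3)\Rightarrow 1)$ invokes Remark~\ref{rem}. You instead prove $(1)\Leftrightarrow(3)$ in one stroke from the same identity $D\pi(\gamma,\alpha,\beta)=-\Phi(\alpha,\beta,\gamma)$ of Remark~\ref{rem}, get $(3)\Rightarrow(2)$ for free from $d(df)=0$, and carry the real weight in $(2)\Rightarrow(3)$ via the tensoriality of $\Phi$ in $\gamma$ plus pointwise localization. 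This is a genuine gain in two respects: it bypasses the paper's Lie-derivative computation entirely (whose final ``Cartan's formula'' step, as written, is the weakest link in the paper's argument), and it supplies in full the localization argument that the paper's one-line $2)\Rightarrow 3)$ merely gestures at — the verification that the correction terms from $d(h\gamma)=dh\wedge\gamma+h\,d\gamma$ and from the Leibniz rule for $D_\alpha(h\gamma)$ cancel is exactly the content that needed to be checked, and your sign bookkeeping there is right. What the paper's route buys in exchange is the explicit identity $\pi(D_\alpha df,\beta)+\pi(\alpha,D_\beta df)=-\mathcal{L}_{\pi_\sharp(df)}\pi(\alpha,\beta)$, which has some independent interest but is not needed for the equivalence.
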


\begin{proof}$~$
\begin{itemize}
\item[$1)\, \Rightarrow \, 2)$]
$$D\pi = 0\, \Leftrightarrow \,\pi_\sharp(\alpha) \pi(\beta, \gamma) = \pi(D_\alpha \beta, \gamma) + \pi(\beta, D_\alpha \gamma).$$
Hence
\begin{eqnarray*}
\pi(D_\alpha df, \beta) &=& \pi_\sharp (\alpha)\pi(df,\beta) - \pi(df, D_\alpha \beta) \\
\pi(\alpha, D_\beta df) &=& \pi_\sharp(\beta) \pi(\alpha, df) - \pi(D_\beta \alpha,df). 
\end{eqnarray*}
By combining  the above equations, one has:
\begin{eqnarray*}
\pi(D_\alpha df, \beta) + \pi(\alpha, D_\beta df) 
= \pi_\sharp(\alpha) \beta(\pi_\sharp(df)) - \pi_\sharp(\beta)\alpha(\pi_\sharp(df)) - [\alpha,\beta]_\pi(\pi_\sharp(df)).
\end{eqnarray*}
From \cite[lemma 2.2]{AV}, we have   
$$[\alpha, \beta]_\pi(\pi_\sharp(df)) = \pi_\sharp(\alpha)\beta(\pi_\sharp(df)) - \pi_\sharp(\beta)\alpha(\pi_\sharp(df) + \mathcal{L}_{\pi_\sharp(df)}\pi(\alpha,\beta). $$
Therefore
\begin{eqnarray*}
\pi(D_\alpha df, \beta) + \pi(\alpha, D_\beta df) = - \mathcal{L}_{\pi_\sharp(df)}\pi(\alpha,\beta).
\end{eqnarray*}
From Cartan's formula, one can deduced
\begin{eqnarray*}
\mathcal{L}_{\pi_\sharp(df)}\pi(\alpha,\beta) = d(\pi_\sharp(df))(\pi(\alpha,\beta) + d(\pi(\alpha,\beta) (d(\pi_\sharp(df)))) = 0
\end{eqnarray*}
because of $$d(\pi_\sharp(df) )= d\Big( \sum_{j,k}\pi_{jk}(\frac{\partial f}{\partial z_j}\partial\overline{z}_k - \frac{\partial f}{\partial \overline{z}_j}\partial z_k)\Big) =0.$$
\item[$2)\, \Rightarrow \, 3)$] By choosing $\gamma = df$, one can conclude.
\item[$3)\, \Rightarrow \, 1)$] It's come from the remark \ref{rem}. 
\end{itemize}
\end{proof}
\begin{theorem}
Let $(M,g)$ be a complex manifold endowed with a Riemannian metric $g$. Let $\pi$ be a Poisson tensor on $M$. Then the following assertions are equivalent:
\begin{enumerate}
\item $(M,\pi,g)$ is a complex Riemann-Poisson manifold.
\item $(M,\pi,g,J)$ is a K\"ahler-Poisson manifold.
\end{enumerate}
\end{theorem}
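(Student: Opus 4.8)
The plan is as follows. Since $M$ is a complex manifold, the almost complex structure $J$ is integrable, and the Riemannian metric is obtained as $g=\frac12(h+\overline h)$ from the Hermitian form $h=\sum_{j,k}h_{jk}\,dz_j\otimes d\overline z_k$ together with its associated $(1,1)$-form $\omega=g(\cdot,J\cdot)$. By Lemma \ref{LemH} the form $h$, and hence $g$ and $\omega$, is $J$-invariant, so by Proposition \ref{Prop} and the theorem that every Hermitian manifold is a complex Riemann-K\"ahler manifold, the quadruple $(M,g,\omega,J)$ is automatically K\"ahler. Moreover, by the proposition showing that $\pi$ is $J$-invariant, the Poisson tensor is already compatible with $J$. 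Hence the only substantive condition distinguishing a K\"ahler-Poisson structure from the bare data $(M,\pi,g,J)$ is the contravariant compatibility $D\pi=0$, i.e.\ exactly condition \eqref{ED2} defining a complex Riemann-Poisson manifold. Once this observation is made, both implications become almost tautological, and the real work is to confirm that on a complex manifold the notion ``K\"ahler-Poisson'' reduces to ``$D\pi=0$''.

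For $(1)\Rightarrow(2)$, assume $(M,\pi,g)$ is complex Riemann-Poisson, so that the Levi-Civita contravariant connection $D$ of $(\pi,g^\ast)$ satisfies $D\pi=0$. Combining this with the K\"ahler structure $(g,\omega,J)$ supplied for free by Lemma \ref{LemH} and Proposition \ref{Prop}, and with the $J$-invariance of $\pi$, one obtains all the axioms of a K\"ahler-Poisson manifold, so $(M,\pi,g,J)$ is K\"ahler-Poisson. For $(2)\Rightarrow(1)$, a K\"ahler-Poisson structure in particular carries the compatibility $D\pi=0$ between $\pi$ and the metric $g$ underlying the K\"ahler form, which is precisely \eqref{ED2}; hence $(M,\pi,g)$ is complex Riemann-Poisson. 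If instead the K\"ahler-Poisson condition is phrased directly through $J$ and $\omega$, I would translate it into $D\pi=0$ using the Koszul formula \eqref{ED1}, the local expressions $\pi=\sum_{j,k}\pi_{jk}\,\partial z_j\wedge\partial\overline z_k$ and $\omega=\frac{i}{2}\sum_{j,k}h_{jk}\,dz_j\wedge d\overline z_k$, and the identities $J(\partial z_j)=i\,\partial z_j$, $J(\partial\overline z_j)=-i\,\partial\overline z_j$.

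The main obstacle is the bridge between the K\"ahler condition and $D\pi$: one must verify that the contravariant Levi-Civita connection $D$ is itself compatible with $J$ --- the contravariant analogue of $\nabla J=0$ --- in the sense that each operator $D_\alpha$ preserves the type decomposition $T^\ast M=\Lambda^{1,0}M\oplus\Lambda^{0,1}M$. I would deduce this from the Koszul formula \eqref{ED1} by checking first that $g^\ast$ is $J$-Hermitian on $T^\ast M$, which follows from the $J$-invariance of $g$ in Lemma \ref{LemH}, and second that the Koszul bracket $[\cdot,\cdot]_\pi$ respects the splitting of $1$-forms, which follows from $\pi$ being of pure type $(1,1)$ together with $d=\partial+\overline\partial$. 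Granting this, the single equation $D\pi=0$ decomposes into its holomorphic and antiholomorphic halves, which together constitute the K\"ahler-Poisson compatibility, and the equivalence follows. I expect this type-bookkeeping inside the Koszul formula to be the only genuinely technical point; everything else is an assembly of Lemma \ref{LemH}, Proposition \ref{Prop}, Remark \ref{rem}, the $J$-invariance of $\pi$, and the definitions.
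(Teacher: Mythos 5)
There is a genuine gap, and it sits exactly where you yourself flag it. Your argument only becomes ``almost tautological'' because you stipulate that a K\"ahler--Poisson structure on $(M,\pi,g,J)$ means ``K\"ahler plus $D\pi=0$''; but neither you nor the paper ever fixes a definition of K\"ahler--Poisson manifold, so both implications are being settled by fiat rather than by proof. The one substantive mathematical claim in your plan --- that the contravariant Levi-Civita connection $D_\alpha$ of $(\pi,g^\ast)$ preserves the type decomposition $T^\ast M=\Lambda^{1,0}M\oplus\Lambda^{0,1}M$, i.e.\ the contravariant analogue of $\nabla J=0$ --- is precisely the K\"ahler-type condition in disguise, and you never verify it: the phrases ``I would deduce this from the Koszul formula'' and ``Granting this'' mark the step that actually needs to be carried out. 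Note also that you lean on Proposition \ref{Prop} in the form ``every Hermitian manifold with integrable $J$ is K\"ahler''; as stated that omits the closedness condition $d\omega=0$, which is exactly the kind of condition your type-preservation claim would have to absorb, so it cannot be taken for free.

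For comparison: the paper's own proof of this theorem is a single sentence citing Proposition 3.7 of Alba--Vargas \cite{AV}, where the relevant compatibility notions are defined and the equivalence between the contravariant condition $D\pi=0$ and the K\"ahler--Poisson condition is established. So your attempt at a self-contained argument is necessarily a different route from the paper's, and it is the more honest one --- but to make it a proof you must (i) state the definition of K\"ahler--Poisson you are using (presumably the one from \cite{AV}), and (ii) actually perform the type-bookkeeping in the Koszul formula \eqref{ED1}: check that $g^\ast$ pairs $\Lambda^{1,0}$ with $\Lambda^{0,1}$, that $\pi_\#$ sends $\Lambda^{1,0}$ to $T^{0,1}M$ and conversely, and compute $[\,\cdot,\cdot\,]_\pi$ on forms of pure type, before concluding that $D\pi=0$ splits into the holomorphic and antiholomorphic halves you need. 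As written, the proposal is a plausible plan, not a proof.
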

\begin{proof}
It's an application of \cite[Proposition 3.7]{AV}.
\end{proof}
\section{Example}
Let $\C^\ast$ be a complex manifold endowed with a Hermitian metric denoted $g$ define by:
$$g=z\overline{z} dz\otimes d\overline{z}.$$
The induced dual metric on $\C^\ast$, denoted $g^\ast$ is giving by:
\begin{eqnarray*}
g^\ast(dz,dz)=z\overline{z} &=&g^\ast(d\overline{z},d\overline{z}) \\
g^\ast(d\overline{z},dz)&=&g^\ast(dz,d\overline{z})=0.
\end{eqnarray*} 
Let $\pi$ be a Poisson tensor defined by: 
\begin{equation}
\pi=\pi_{11}\partial z\wedge\partial\overline{z}.
\end{equation}

\begin{prop}
$(\C^\ast,g, \pi)$ is a complex Riemann-Poisson (Hermitian-Poisson) manifold if and only if $\pi_{11} = c|z|^2$, $c\in \C^*$.
\end{prop}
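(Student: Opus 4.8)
The statement to prove is that on $\C^\ast$ with Hermitian metric $g=z\overline z\,dz\otimes d\overline z$ and Poisson tensor $\pi=\pi_{11}\,\partial z\wedge\partial\overline z$, the triple $(\C^\ast,g,\pi)$ is a complex Riemann--Poisson manifold exactly when $\pi_{11}=c|z|^2$ for some $c\in\C^\ast$. The natural approach is to compute the Levi-Civita contravariant connection $D$ associated to the couple $(\pi,g^\ast)$ directly from the Koszul formula \eqref{ED1}, evaluate the compatibility condition \eqref{ED2}, i.e. $D\pi=0$, on the basis one-forms $\{dz,d\overline z\}$, and see which functions $\pi_{11}$ make it vanish. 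Since the manifold is one complex dimensional the bivector has only the single component $\pi_{11}$ and the dual metric has only the two nonzero entries $g^\ast(dz,dz)=g^\ast(d\overline z,d\overline z)=z\overline z$, so all sums collapse and the computation is completely explicit.

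\textbf{Key steps.} First I would record the anchor map: $\pi_\sharp(dz)=\pi_{11}\,\partial\overline z$ and $\pi_\sharp(d\overline z)=-\pi_{11}\,\partial z$ (reading off from $\pi$), so that $\pi_\sharp(dz)\cdot\varphi=\pi_{11}\,\partial_{\overline z}\varphi$ and $\pi_\sharp(d\overline z)\cdot\varphi=-\pi_{11}\,\partial_z\varphi$ for any function $\varphi$. Second, I would compute the Koszul brackets $[dz,dz]_\pi$, $[dz,d\overline z]_\pi$, $[d\overline z,d\overline z]_\pi$ using $[\alpha,\beta]_\pi=\mathcal L_{\pi_\sharp\alpha}\beta-\mathcal L_{\pi_\sharp\beta}\alpha-d(\pi(\alpha,\beta))$; for exact forms $dz,d\overline z$ this reduces to $[dz,d\overline z]_\pi=d(\pi_{11})$ up to sign and $[dz,dz]_\pi=[d\overline z,d\overline z]_\pi=0$. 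Third, I would plug these into \eqref{ED1} to get the four quantities $D_{dz}dz$, $D_{dz}d\overline z$, $D_{d\overline z}dz$, $D_{d\overline z}d\overline z$ expressed in terms of $\pi_{11}$ and the derivatives of $z\overline z$; here the identity $D_\alpha\beta-D_\beta\alpha=[\alpha,\beta]_\pi$ provides a consistency check. Fourth, I would write out $D\pi(dz,dz)$, $D\pi(dz,d\overline z)$, etc., via $D_\alpha\pi(\beta,\gamma)=\pi_\sharp(\alpha)\cdot\pi(\beta,\gamma)-\pi(D_\alpha\beta,\gamma)-\pi(\beta,D_\alpha\gamma)$. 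The only nontrivial component is built from $\pi(dz,d\overline z)=\pi_{11}$ and $\pi_\sharp(dz)\cdot\pi_{11}=\pi_{11}\partial_{\overline z}\pi_{11}$ and the connection terms; setting this to zero yields a first-order PDE in $\pi_{11}$ that, after simplification, should read (schematically) $\partial_z\!\big(\pi_{11}/(z\overline z)\big)=0$ and $\partial_{\overline z}\!\big(\pi_{11}/(z\overline z)\big)=0$, forcing $\pi_{11}/(z\overline z)$ to be a (locally constant, hence constant since $\C^\ast$ is connected) constant $c$, and $c\neq0$ because $\pi$ is nondegenerate. I would then also verify the converse: substituting $\pi_{11}=c\,z\overline z$ back in makes every term of $D\pi$ cancel, so the condition is exactly $\pi_{11}=c|z|^2$. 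Using the equivalence in the preceding theorem (complex Riemann--Poisson $\iff$ K\"ahler--Poisson) and the fact that a Hermitian manifold is complex Riemann--K\"ahler, the "Hermitian-Poisson" phrasing follows with no extra work.

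\textbf{Main obstacle.} The delicate part is the bookkeeping in the Koszul formula with the complex/conjugate indices: one must be careful that $g^\ast$ pairs $dz$ with $d\overline z$ trivially but $dz$ with $dz$ nontrivially (which is the reverse of the "real" intuition), and that $\pi_\sharp$ swaps $\partial z\leftrightarrow\partial\overline z$ with a sign. A sign error there propagates into the PDE. I expect the cleanest route is actually to bypass raw Koszul and instead use characterization (2) or (3) of the preceding Proposition: taking $\gamma=df$ and testing on $f=z$ and $f=\overline z$ reduces the whole problem to the single identity $\pi(D_{dz}d\overline z,d\overline z)+\pi(dz,D_{d\overline z}d\overline z)$-type expressions, each of which needs only one or two Christoffel-type symbols of $D$. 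Once the relevant symbols $D_{dz}d\overline z$ and $D_{d\overline z}dz$ are in hand, the equivalence condition becomes a short differential equation whose general solution is visibly $\pi_{11}=c\,z\overline z$; isolating that ODE/PDE and solving it on the connected manifold $\C^\ast$ is the crux, and everything else is routine substitution.
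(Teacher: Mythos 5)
Your plan is correct and follows essentially the same route as the paper: compute the Levi-Civita contravariant connection of $(\pi,g^\ast)$ on the basis $\{dz,d\overline z\}$ via the Koszul formula, impose $D\pi=0$ componentwise, and reduce to the first-order system $\partial_z\pi_{11}=\pi_{11}/z$, $\partial_{\overline z}\pi_{11}=\pi_{11}/\overline z$ (equivalently $\partial_z(\pi_{11}/z\overline z)=\partial_{\overline z}(\pi_{11}/z\overline z)=0$), whose solutions on the connected manifold $\C^\ast$ are exactly $\pi_{11}=c\,z\overline z$. The paper simply carries out the bookkeeping you describe by tabulating the Christoffel symbols of $D$ explicitly before assembling the components of $D\pi$.
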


\begin{proof}
The Levi-Civita contravariant connection of $g$ is defined by its Christoffel, which the non vanishing are:
\begin{eqnarray*}
\Gamma_{11}^1&=&\frac{\pi_{11}}{2\overline{z}}\\
\Gamma_{11}^2&=&\frac{\pi_{11}}{2\overline{z}}-\frac{\partial\pi_{11}}{\partial z}\\
\Gamma_{12}^1&=&\frac{-\pi_{11}}{2 z}+\frac{\partial\pi_{11}}{\partial z}\\
\Gamma_{12}^2&=&\frac{\pi_{11}}{2 \overline{z}}\\
\Gamma_{22}^1&=&\frac{-\pi_{11}}{2\overline{z}}+\frac{\partial\pi_{11}}{\partial\overline{z}}\\
\Gamma_{22}^2&=&\frac{-\pi_{11}}{2 z}\\
\Gamma_{21}^1&=&-\frac{\partial\pi_{11}}{\partial z}+\Gamma_{12}^1=-\frac{\pi_{11}}{2z}\\
\Gamma_{21}^2&=&-\frac{\partial\pi_{11}}{\partial\overline{z}}+\Gamma_{12}^2=-\frac{\partial\pi_{11}}{\partial\overline{z}}+\frac{\pi_{11}}{2 \overline{z}}.
\end{eqnarray*}
Using the equation \eqref{ED2}, one has
\begin{eqnarray*}
\left\lbrace\begin{array}{ccc}
D_{dz}\pi(dz,dz)&=&0 \\
D_{dz}\pi(d\overline{z},dz)&=&-\pi_{11}\frac{\partial\pi_{11}}{\partial\overline{z}}+\frac{\pi_{11}^2}{\overline{z}} \\
D_{dz}\pi(dz,d\overline{z})&=& \pi_{11}\frac{\partial\pi_{11}}{\partial\overline{z}}-\frac{\pi_{11}^2}{\overline{z}}  \\
D_{d\overline{z}}\pi(dz,dz)&=& 0\\
D_{d\overline{z}}\pi(dz,d\overline{z})&=&\frac{\pi_{11}^2}{z}-\pi_{11}\frac{\partial\pi_{11}}{\partial z} \\
D_{d\overline{z}}\pi(d\overline{z},dz)&=&-\frac{\pi_{11}^2}{z}+\pi_{11}\frac{\partial\pi_{11}}{\partial z} \\
D_{d\overline{z}}\pi(d\overline{z},d\overline{z})&=& 0 .
\end{array}
\right.
\end{eqnarray*}
Hence   $D\pi = 0$ if and only if:
\begin{eqnarray}\label{EQPISOLV}
\left\lbrace\begin{array}{ccc}
\frac{\pi_{11}^2}{z}-\pi_{11}\frac{\partial\pi_{11}}{\partial z}&=&0\\
\frac{\pi_{11}^2}{\overline{z}}-\pi_{11}\frac{\partial \pi_{11}}{\partial\overline{z}}&=&0
\end{array}\right.\;\Longleftrightarrow\; \frac{\pi_{11}^2}{z}-\frac{1}{2}\frac{\partial}{\partial z}( \pi_{11}^2)=0.
\end{eqnarray}  
The last equation of the equivalence \eqref{EQPISOLV}, has as solution the functions $f(z) = c|z|^2$, for a constant $c\in\C$.
\end{proof}
As example, if we choose $c=2i$, then $\pi = 2i|z|^2 \partial z\wedge \partial\overline{z}$, and the triplet  $(\C^\ast,g, \pi)$ is a Hermitian-Poisson (or complex Riemann-Poisson) manifold. In this case $\pi$ is real.

If  $X=a(z)\partial z+b(z)\partial\overline{z}$ is a vector field on $\C^\ast$, then one has:
$$[X,\partial z]=-\partial z(a(z))\partial z,$$
and
$$[X,\partial \overline{z}]=-\partial \overline{z}(b(z))\partial \overline{z}.$$
Hence,
\begin{eqnarray*}
[X,\pi]&=&-c^2|z|^4\Big[\partial z\Big(\frac{a(z)}{cz\overline{z}}\Big)+\partial\overline{z}\Big(\frac{b(z)}{cz\overline{z}}\Big)\Big]\partial z\wedge\partial\overline{z}.
\end{eqnarray*}
$$[X,\pi]=0\;\Longleftrightarrow\; \partial z\Big(\frac{a(z)}{cz\overline{z}}\Big)+\partial\overline{z}\Big(\frac{b(z)}{cz\overline{z}}\Big)=0.$$

If $X$ is holomorphic (i.e. $b=0$), then $a= kz$, with $k\in \C$.

If $X$ is antiholomorphic (i.e. $a=0$), then $b=k\overline{z}$, with $k\in \C$.

\end{document}